\journal{Computational Geometry: Theory and Applications}
\newcommand*{\fancyrefthmlabelprefix}{thm}
\newcommand*{\fancyreflemmalabelprefix}{lemma}
\newcommand*{\fancyrefclaimlabelprefix}{claim}
\newcommand*{\fancyrefdeflabelprefix}{def}
\newcommand*{\fancyrefcaselabelprefix}{case}
\newcommand*{\fancyrefremlabelprefix}{rem}
\newtheorem{theorem}{Theorem}
\newtheorem{lemma}[theorem]{Lemma}
\newtheorem{corollary}[theorem]{Corollary}
\newtheorem{definition}[theorem]{Definition}
\newtheorem{claim}[theorem]{Claim}
\newtheorem{remark}[theorem]{Remark}
\newcommand{\ff}[1]{\left\lfloor\frac{3 #1 +4}{16}\right\rfloor}
\newcommand{\dotcup}{\mathop{\overset{\star}{\cup}}}
\newcommand{\includefigure}[1]{\includegraphics{#1}}
\newcommand{\figcap}[2]{
    \begin{figure}[H]
      \centering
      \includefigure{#1.pdf}
      \caption{#2}\label{fig:#1}
    \end{figure}}
\newcommand*\circled[1]{\tikz[baseline=(char.base)]{
		\node[shape=circle,draw,inner sep=1pt] (char) {#1};}}
\begin{document}

\begin{frontmatter}
  \title{Partitioning orthogonal polygons into $\le$ 8-vertex pieces,
  with application to an art gallery theorem}


  \author[renyi,ceu]{Ervin Gy\H{o}ri}\fnref{fn1}\ead{ervin@renyi.hu}
  \author[ceu]{Tam\'as R\'obert Mezei\corref{cor1}}
  \ead{tamasrobert.mezei@gmail.com}

  \cortext[cor1]{Corresponding author}
  \fntext[fn1]{The research is partially supported by OTKA Grant 116 769.}
  
  \address[renyi]{MTA Alfr\'ed R\'enyi Institute of Mathematics,
          Re\'altanoda u. 13--15, 1053 Budapest, Hungary}
  \address[ceu]{Central European University,
          Department of Mathematics and its Applications,
          N\'ador u. 9, 1051 Budapest, Hungary}

  \begin{abstract}
    We prove that every simply connected orthogonal polygon of $n$
    vertices can be partitioned into $\ff{n}$ (simply connected)
    orthogonal polygons of at most 8 vertices. It yields a new and
    shorter proof of the theorem of A.~Aggarwal that $\ff{n}$
    mobile guards are sufficient to control the interior of an
    $n$-vertex orthogonal polygon. Moreover, we strengthen this
    result by requiring combinatorial guards
    (visibility is only needed at the endpoints of patrols) and
    prohibiting intersecting patrols. This yields positive answers
    to two questions of O'Rourke \citep[Section~3.4]{ORourke}.
    Our result is also a further example of the ``metatheorem'' that
    (orthogonal) art gallery theorems are based on partition
    theorems.
  \end{abstract}

  \begin{keyword}
    art gallery \sep polyomino partition \sep mobile guard
  \end{keyword}

\end{frontmatter}


\section{Introduction}

First let us define the basic object studied in this paper.

\begin{definition}\label{def:simplepolyomino}
	A simply connected orthogonal polygon or simple polyomino $P$ is the closed region of the plane
	bounded by a closed (not self-intersecting) polygon, whose
	angles are all $\pi/2$ (convex) or $3\pi/2$ (reflex). We denote the number of $P$'s
	vertices by $n(P)$.
\end{definition}

This definition implies that $n(P)$ is even. We want to emphasize the combinatorial structure of $P$ rather than its geometry, so in this paper we refer to such objects as simple polyominoes, or polyominoes for short. When we talk about not necessarily simply connected polyominoes, we explicitly state it.

\medskip

Kahn, Klawe and Kleitman \citep{K3} in 1980 proved that $\lfloor
n/4\rfloor$ guards are sometimes necessary and always sufficient to
cover the interior of a simple polyomino of $n$ vertices. Later the
first author of this paper provided a simple and short proof of

\begin{theorem}[\citep{Gy}, {\citep[Thm.~2.5]{ORourke}}]\label{thm:static}
  Every simple polyomino of $n$ vertices can be
  partitioned into $\lfloor  n/4 \rfloor$  polyominoes of at most 6
  vertices.
\end{theorem}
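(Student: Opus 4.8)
The plan is to prove the statement by induction on $n$, using the standard angle count for simply connected orthogonal polygons: if $P$ has $c$ convex and $r$ reflex vertices, then $\tfrac{\pi}{2}c-\tfrac{\pi}{2}r=2\pi$, so $c=r+4$ and $n=2r+4$. In particular a polyomino has at most $6$ vertices precisely when it has at most one reflex vertex, and the target becomes $\lfloor n/4\rfloor=\lfloor r/2\rfloor+1$. The base cases $r\le 1$ are immediate: then $P$ is itself a rectangle or an L-shaped hexagon, one piece. The pieces produced below will all be simply connected, since cutting a simply connected region along a chord yields two simply connected regions.

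For the inductive step ($r\ge 2$) the idea is to cut $P$ along a single axis-parallel chord $c$ emanating from a reflex vertex $v$ into the interior (a chord whose relative interior lies in $\mathrm{int}\,P$). Such a chord always exists, and following it from $v$ to its first intersection with $\partial P$ one sees that its other endpoint is either the relative interior of an edge of $P$ or another reflex vertex of $P$: it cannot be a convex vertex, since the interior wedge there is a quadrant bounded by two perpendicular axis-parallel rays and so has no axis-parallel direction in its relative interior. Cutting along $c$ produces two polyominoes $P_1,P_2$; the chord turns the $3\pi/2$ angle at $v$ into a $\pi/2$ and a $\pi$ angle (so $v$ is non-reflex in both pieces), it likewise kills a possible reflex vertex at the far endpoint, and every remaining reflex vertex of $P$ survives in exactly one $P_i$. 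Writing $r_i$ for the number of reflex vertices of $P_i$, call $c$ favorable if either (a) its far endpoint is another reflex vertex, so $r_1+r_2=r-2$, or (b) it cuts off a piece $P_1$ with exactly one reflex vertex (automatically an L-hexagon), so $r_2=r-2$. In case (a) the induction hypothesis gives at most $(\lfloor r_1/2\rfloor+1)+(\lfloor r_2/2\rfloor+1)\le\lfloor(r-2)/2\rfloor+2=\lfloor r/2\rfloor+1$ pieces; in case (b) it gives at most $1+(\lfloor(r-2)/2\rfloor+1)=\lfloor r/2\rfloor+1$. (If $r$ is even a cruder cut already suffices: any chord from $v$ to an edge interior has $r_1+r_2=r-1$, whence $\lfloor r_1/2\rfloor+\lfloor r_2/2\rfloor\le\lfloor(r-1)/2\rfloor=\lfloor r/2\rfloor-1$; the tight case is $r$ odd, where a chord to an edge interior that splits the reflex vertices into two even groups --- in particular one cutting off a rectangle --- would overshoot by exactly one piece.)

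The heart of the proof, and the step I expect to be the main obstacle, is therefore the combinatorial lemma that a favorable chord always exists when $r\ge 2$. The plan is to attack it by choosing $v$ to be an extremal reflex vertex --- say of maximum $y$-coordinate, with ties broken by maximum $x$-coordinate --- and examining the two interior axis-parallel chords leaving $v$: the extremality of $v$ should sharply restrict which reflex vertices can lie in the region cut off by each chord, and a finite case analysis on whether each chord reaches an edge interior or a reflex vertex ought to exhibit a favorable one, in particular resolving the odd-$r$ parity issue by either reaching a second reflex vertex or arranging that both pieces inherit an odd number of reflex vertices. If this direct analysis turns out to be awkward for highly wound ("spiral-like") polyominoes, the standard remedy is to strengthen the induction hypothesis --- for instance by additionally prescribing a boundary edge of $P$ that must remain an edge of one output piece --- which typically absorbs the parity friction; I would invoke that refinement only if the extremal-vertex argument does not close cleanly.
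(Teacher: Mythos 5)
A preliminary remark: the paper does not prove this statement at all --- it is quoted as a known result of \citep{Gy} and \citep[Thm.~2.5]{ORourke} --- so your proposal has to be measured against those standard proofs rather than against anything in this paper.

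Your reduction is set up correctly and is indeed the skeleton of the known argument: $n=2r+4$, so the target is $\lfloor r/2\rfloor+1$ pieces each having at most one reflex vertex; a cut joining two reflex vertices (a 2-cut) is always good; any 1-cut from a reflex vertex is good when $r$ is even; and for odd $r$ one needs a 1-cut splitting the remaining $r-1$ reflex vertices into two odd groups (an ``odd cut'' in O'Rourke's terminology). All of your bookkeeping around this is right. The genuine gap is that the existence of such a cut \emph{is} the theorem, and you have not proved it: you only announce a plan (take an extremal reflex vertex and do a case analysis) and explicitly hedge that you may have to strengthen the induction hypothesis if the plan fails. In \citep{ORourke} this existence statement is an independent, nontrivial lemma whose proof is a multi-page case analysis, and in \citep{Gy} it is likewise the main work; it does not follow quickly from extremality, since the topmost reflex vertex controls which reflex vertices lie above its horizontal chord but says nothing about the parity of the split, and its vertical chord can bury arbitrarily many reflex vertices on either side. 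A secondary issue: your formal definition of ``favorable'' in case (b) demands $r_1=1$ exactly (cutting off an L-hexagon), which is strictly stronger than the odd-cut condition $r_1$ odd that the count actually requires, and there is no reason to expect a cut isolating exactly one reflex vertex to exist in branching polyominoes without 2-cuts; you should aim for the general odd cut, as your own parenthetical already suggests. As it stands the proposal is a correct reduction plus an unproven key lemma, not a proof.
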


\Fref{thm:static} is a deeper result than that of Kahn, Klawe and
Kleitman, and gave the first hint of the existence of a
``metatheorem'': (orthogonal) art gallery theorems have underlying
partition theorems. The general case was proved by Hoffmann
\citep{Hof}.

\begin{theorem}[\citep{Hof}]\label{thm:Hoffmann}
  Every (not necessarily simply connected!) orthogonal polygon with
  $n$ vertices can be covered by $\lfloor n/4\rfloor$ guards.
\end{theorem}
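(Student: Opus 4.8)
The plan is to push the Kahn--Klawe--Kleitman argument (the one also underlying \Fref{thm:static}) through to polygons with holes. Recall its shape in the simply connected case: first partition the polygon into convex quadrilaterals using only diagonals between its own vertices; then properly $4$-colour the plane graph $G$ formed by the polygon's sides together with these diagonals, so that every quadrilateral face receives all four colours; finally take a colour class of minimum cardinality --- it contains at most $\lfloor n/4\rfloor$ vertices --- and place a guard at each of its vertices. Since a convex quadrilateral is visible in full from each of its four corners, these guards cover the whole polygon. The final step is completely insensitive to the topology of $P$, so the real task is to secure the convex quadrilateralization and the rainbow $4$-colouring in the presence of holes.

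For the quadrilateralization I would argue by induction on the number $h$ of holes. Given a hole, look for a horizontal, vertical, or staircase diagonal between two vertices that joins it to the outer boundary (or to another hole), cut $P$ along it so that $h$ strictly decreases, and recurse; when $h=0$ invoke the classical simple-polygon quadrilateralization. The naive alternative of slicing a hole open along a doubled path fails because it roughly doubles the vertices along the cut and wrecks the $\lfloor n/4\rfloor$ bound, whereas a single vertex-to-vertex cut adds no vertices at all. The subtlety is that such a clean cut need not exist for an arbitrarily placed hole, so some care (rerouting, or occasionally subdividing one quadrilateral by an extra diagonal) may be required; crucially, every hole already contributes at least four vertices to $n$, which is exactly the slack needed to absorb any vertices created by these repairs.

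The step I expect to be the genuine obstacle is the rainbow $4$-colouring. When $P$ is simply connected the dual of the quadrilateralization is a tree, and the colouring can be built up one quadrilateral at a time along that tree; once $P$ has $h$ holes the dual acquires $h$ independent cycles, and around a cycle of quadrilaterals enclosing a hole there can be a parity obstruction that rules out a rainbow $4$-colouring outright. This is precisely the phenomenon that makes the holey case strictly harder than \Fref{thm:static}. My plan would be to choose the cuts in the previous step so that each hole-enclosing cycle of quadrilaterals has an admissible length, and, where this cannot be arranged, to split one quadrilateral of that cycle into two along a diagonal, flipping the relevant parity at the cost of a single vertex --- again paid for out of the hole's vertex budget. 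Granting all of this, the minimum-colour-class bound yields $\lfloor n/4\rfloor$ guards, which is best possible already for the orthogonal comb, and remains so in the presence of holes.
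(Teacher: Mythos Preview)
This theorem is not proved in the present paper; it is quoted from Hoffmann~\citep{Hof}, and the only information the paper gives about the argument is that Hoffmann's method consists in ``partitioning into smaller pieces that can be covered by one guard.'' So there is no quadrilateralization-plus-colouring proof here to compare against; Hoffmann's route is a partition theorem in the spirit of \Fref{thm:static}, not an extension of the Kahn--Klawe--Kleitman colouring.

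Your proposal, by contrast, tries to push the colouring argument through holes, and the sketch has a genuine gap at exactly the place you flag as ``the genuine obstacle.'' The parity obstruction to a rainbow $4$-colouring around a hole is real, and your repair does not type-check. Splitting a convex quadrilateral ``along a diagonal'' yields two triangles, not two quadrilaterals, so the face-gets-all-four-colours mechanism collapses there; and a diagonal between existing vertices adds \emph{no} vertex, so there is nothing to ``flip the parity at the cost of a single vertex.'' If instead you mean to insert a Steiner point, then you are increasing $n$, and the ``hole's vertex budget'' you appeal to is illusory: the hole's (at least four) vertices are already counted in $n$ and hence already spent on the target $\lfloor n/4\rfloor$; there is no slack to reimburse extra Steiner points. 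The quadrilateralization step has a similar soft spot: a clean vertex-to-vertex cut from a hole to the outer boundary need not exist, and your fallback of ``occasionally subdividing one quadrilateral by an extra diagonal'' runs into the same bookkeeping problem.

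In short, the approach is different from Hoffmann's and, as written, does not close. If you want a route that survives holes, the partition paradigm the paper advertises is the one that is known to work.
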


Hoffmann's method  (partitioning into smaller pieces that can be
covered by one guard) is another example of the metatheorem.

\medskip

In this paper, we present further evidence that the metatheorem holds,
namely we prove the following partition theorem:

\begin{theorem}\label{thm:mobile}
  Any simple polyomino of $n$ vertices
  can be partitioned into at most $\ff{n}$ simple polyominoes of
  at most 8 vertices.
\end{theorem}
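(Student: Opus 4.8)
The plan is to prove \Fref{thm:mobile} by induction on $n$. For $n\le 8$ there is nothing to prove: $P$ itself is the required partition, since a simple polyomino has $n\ge 4$ and $\ff{n}\ge 1$ for every $n\ge 4$. For the inductive step the idea is to cut $P$ with an axis-parallel chord — a maximal axis-parallel segment having both endpoints on $\partial P$ and interior inside $\operatorname{int}(P)$ — into two simple polyominoes $P_1,P_2$ (cutting a simply connected region along a chord yields two simply connected regions), apply the inductive hypothesis to each, and take the union of the two partitions. This succeeds as soon as the chord can be chosen so that $\ff{n(P_1)}+\ff{n(P_2)}\le\ff{n}$, which, writing $n=2m$, is a near-subadditivity of $m\mapsto\lfloor(3m+2)/8\rfloor$.

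Two families of cuts drive the induction. First, a chord joining two reflex vertices of $P$: it turns both of them into convex vertices, so $n(P_1)+n(P_2)=n$, and a short computation shows $\ff{n_1}+\ff{n_2}\le\ff{n}$ then holds outside a few residue classes of $n$ modulo $16$, in which one additionally needs the split to be reasonably balanced. Second, a single chord (emanating from a reflex vertex, ending either at another reflex vertex or perpendicularly on an edge) that cuts off a sub-polyomino $P_1$ with \emph{exactly} $8$ vertices; then $n(P_2)\le n-6$, and one checks $\ff{8}+\ff{n(P_2)}\le\ff{n}$ for every $n$. The thing to avoid is being forced to spend a whole part on a $4$- or $6$-vertex ear by means of a chord that creates new vertices: for such tiny pieces the inequality fails in several residue classes, so instead one must find a more balanced chord, enlarge the ear to an $8$-vertex piece, or merge the ear with a neighbour before recursing.

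So the technical heart is a structural lemma: every simple polyomino with $n\ge 10$ vertices either has a chord between two reflex vertices that is balanced enough to beat the floor functions, or admits a single chord cutting off an $8$-vertex sub-polyomino. The main obstacle is establishing this for ``rigid'' polyominoes — thin spirals, long staircases, combs — in which chords between reflex vertices are scarce and every reflex-to-edge cut at first glance peels off only a minuscule ear. The plan is to classify the local configurations at a carefully chosen reflex vertex (or at a ``leaf cell'' of the polyomino): in each case one either locates the desired balanced reflex-reflex chord, or traces out a concrete $8$-vertex sub-polyomino bounded by one chord — or, in the few stubborn cases, by two chords adding at most $4$ vertices in total — while checking that the remainder satisfies the inductive hypothesis with slack. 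Pushing this classification through all configurations, and reconciling it with the awkward residues modulo $16$ (for instance by verifying all $n\le 14$ as additional base cases), is where essentially all of the work lies.
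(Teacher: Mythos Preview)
Your proposal is a plan rather than a proof, and its central structural lemma is actually false. You claim that every simple polyomino with $n\ge 10$ vertices either has a reflex--reflex chord whose split beats the floor inequality, or a single straight chord cutting off an $8$-vertex piece. The paper exhibits an explicit $14$-vertex polyomino (the figure captioned ``An L-shaped cut creating a partition into 8-vertex polyominoes'') for which \emph{no} straight chord partitions it into two polyominoes of at most $8$ vertices; since $\ff{14}=2$, any good cut must produce two $\le 8$-vertex pieces, so neither branch of your dichotomy applies there. You do gesture at ``two chords adding at most $4$ vertices in total'' as a patch for stubborn cases, but this is exactly the heart of the matter, not an afterthought: L-shaped cuts are unavoidable, and once you admit them the classification of local configurations becomes substantially more delicate than what you have sketched.

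The paper's proof shares your inductive framework and the residue-class bookkeeping modulo $16$, but organises the case analysis quite differently. Rather than working at a single reflex vertex or leaf cell, it builds the horizontal cut tree $T$ of $P$ and splits into four global cases according to the shape of $T$ (path, contains a corridor, contains a pocket, none of these). The key device that replaces your ``balanced enough chord'' is the notion of a \emph{good cut-system}: a nested family of at most three cuts whose first-piece sizes hit three consecutive even residues modulo $16$, which guarantees via \Fref{lemma:tech} that one of them is a good cut. Two extension lemmas then let one find such a system locally (near a corridor, pocket, or high-degree node of $T$) and propagate it to all of $P$. This is what makes the case analysis finite and checkable; your proposal, by contrast, leaves ``pushing this classification through all configurations'' entirely to the reader, and the $14$-vertex example shows that the classification you propose cannot be completed without the L-cut machinery.
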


A mobile guard is one who can patrol a line segment, and it covers a point $x$ of the gallery if there is a point $y$ on its patrol such that the line segment $[x,y]$ is contained in the gallery. The mobile guard art gallery theorem for simple polyominoes follows immediately from \Fref{thm:mobile}, as a polyomino of at most 8 vertices can be covered by a mobile guard:

\begin{theorem}[\citep{Ag}, proof also in {\citep[Thm.~3.3]{ORourke}}]
\label{thm:mobilecover}
  $\ff{n}$ mobile guards are sufficient for covering an $n$ vertex
  simple polyomino.
\end{theorem}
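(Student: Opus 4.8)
The plan is to read the bound off the partition theorem \Fref{thm:mobile}, once we have the local fact that \emph{every simple polyomino $Q$ with $n(Q)\le 8$ can be covered by a single mobile guard}. Granting this, let $P$ be a simple polyomino on $n$ vertices. By \Fref{thm:mobile} we may write $P=P_1\cup\dots\cup P_k$ with $k\le\ff{n}$ and each $P_i$ a simple polyomino with $n(P_i)\le 8$. Pick in each $P_i$ a patrol segment $s_i\subseteq P_i$ covering $P_i$. Since $s_i\subseteq P_i\subseteq P$, every $s_i$ is a legal mobile guard in $P$, and since $\bigcup_i P_i=P$ the $k\le\ff{n}$ guards $s_1,\dots,s_k$ together cover $P$.

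So everything reduces to the local fact, which I would prove by a short structural case analysis. For an orthogonal polygon the number of convex vertices exceeds the number of reflex vertices by exactly $4$, so $n(Q)\le 8$ is the same as $Q$ having at most two reflex vertices. If $Q$ has none it is a rectangle, and if it has one it is an L-shape; in both cases an axis-parallel chord (through the reflex vertex, in the L-shape) covers $Q$. So assume $Q$ has exactly two reflex vertices. The first step is the remark that \emph{$Q$ is monotone with respect to at least one coordinate axis}: forcing a vertical line to meet $Q$ in $\ge 2$ components already costs two reflex vertices (the two corners of the back wall of a horizontal ``slot''), and forcing an additional horizontal line to meet $Q$ in $\ge 2$ components costs two more, so a polygon monotone in neither direction has $\ge 4$ reflex vertices. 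Interchanging the axes if necessary, assume $Q$ is $x$-monotone; then every nonempty vertical slice $Q_x$ is a single segment $[b(x),t(x)]$, and since only two reflex vertices are available the step functions $b$ and $t$ together have at most two jumps.

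The analysis then splits according to whether $\bigcap_x Q_x$ is empty. If $\bigcap_x Q_x\neq\emptyset$ --- this holds for the rectangle, the L-, T-, C- and U-shapes and the two-step staircase, and more generally whenever the two reflex vertices lie on the same one of the two boundary chains --- then for any $y_0\in\bigcap_x Q_x$ the horizontal chord at height $y_0$ lies in $Q$ (because $y_0\in Q_x$ for every $x$) and is a covering patrol, since each $(x,y)\in Q$ sees the point $(x,y_0)$ of it along the vertical segment inside $Q_x$. If $\bigcap_x Q_x=\emptyset$, then a little bookkeeping with $b$ and $t$ forces $Q$, up to reflection, to be the S/Z-shape: one reflex vertex on each chain, with slices ``high'' on the left, ``tall'' in the middle and ``low'' on the right, so that $Q$ is an inscribed rectangle $R$ carrying one rectangular pocket attached high onto one vertical side of $R$ and another attached low onto the opposite vertical side. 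Now no horizontal level meets every slice, but a full vertical chord of $R$ is a covering patrol: a point of the high pocket sees it along a horizontal segment through the top part of $R$, a point of the low pocket along one through the bottom part of $R$, and a point of $R$ trivially. This exhausts the possibilities, proving the local fact. (These finitely many shapes, with their rotations and reflections, are precisely those treated in \citep[Thm.~3.3]{ORourke}.)

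The hard part is the two-reflex case, and within it the S/Z-shape: the tempting uniform argument --- ``take a single horizontal sight-line height'' --- breaks down exactly there, and one is forced to single out the right inscribed rectangle and check that its vertical chord sees into both pockets. Everything else --- the reduction to pieces via \Fref{thm:mobile}, the rectangle and L-shape, and the subcase $\bigcap_x Q_x\neq\emptyset$ --- is routine.
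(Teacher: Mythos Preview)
Your argument is correct and matches the paper's: deduce the bound from \Fref{thm:mobile} together with the local fact that every polyomino on at most $8$ vertices is covered by a single mobile guard. The paper simply asserts the local fact (deferring to \citep[Thm.~3.3]{ORourke}), whereas you spell out the case analysis on the number of reflex vertices---a welcome elaboration, but not a different route.
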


\Fref{thm:mobile} is a stronger result than \Fref{thm:mobilecover} and it is
interesting on its own. It fits into the series of  results in
\citep{Gy}, \citep{Hof},  \citep[Thm.~2.5]{ORourke}, \citep{GyH} showing that
rectilinear art gallery theorems are based on theorems on partitions of
polyominoes into smaller (``one guardable'') polyominoes.

\medskip

Moreover, \Fref{thm:mobile} directly implies the following corollary
which strengthens the previous theorem and answers two
questions raised by O'Rourke \citep[Section 3.4]{ORourke}.

\begin{corollary}\label{thm:mobilecoverstrong}
  $\ff{n}$ mobile guards are sufficient for covering an $n$ vertex
  simple polyomino such that the patrols of two guards do
  not pass through one another and visibility is only required at the
  endpoints of the patrols.
\end{corollary}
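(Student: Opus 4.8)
The plan is to read the statement off \fref{thm:mobile}. Applying it to the given $n$-vertex simple polyomino $P$ yields simple polyominoes $Q_1,\dots,Q_k$ with $k\le\ff{n}$, each having at most $8$ vertices, with pairwise disjoint interiors and $\bigcup_{i}Q_i=P$. It then suffices to equip each $Q_i$ with a single mobile guard whose patrol is a segment $s_i\subseteq Q_i$ from whose two endpoints every point of $Q_i$ is visible (inside $Q_i$, hence inside $P$), and to arrange that $s_1,\dots,s_k$ are pairwise non-crossing: the first property is exactly the ``combinatorial'' (endpoint-only) visibility asked for in the statement, the second is the non-crossing condition, and $k\le\ff{n}$ is the cardinality bound.

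The heart of the matter is the following local claim: every simple polyomino $Q$ with at most $8$ vertices admits a mobile guard with endpoint-only visibility whose patrol is an axis-parallel segment contained in the \emph{open} interior of $Q$. I would establish this by a finite case analysis, feasible because a polyomino with $m$ vertices has exactly $(m-4)/2$ reflex vertices, so a $\le 8$-vertex polyomino has at most two reflex vertices and is a union of at most three rectangles, falling into only a handful of combinatorial types (a rectangle; an L-shape; and, among the two-reflex shapes, a T, a U, an S/Z, a two-step staircase, a ``wide U'', and an L with a corner bitten off, together with their rotations and reflections). In each type one exhibits the patrol by hand. If the constituent rectangles of $Q$ have a common point lying in $\mathrm{int}\,Q$ then $Q$ is star-shaped from that point and a degenerate (single-point) patrol there does the job -- this covers the rectangle, the L, the T, the S/Z and the staircase. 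Otherwise one takes a short horizontal or vertical segment running along an interior ``seam'' between two of the rectangles, with one endpoint responsible for one part of $Q$ and the other endpoint for the rest; for a U, for instance, one takes a segment across the base whose endpoints sit on the inner walls of the two columns, so that the left endpoint sees the left column together with the base and the right endpoint sees the right column together with the base. That endpoint-only visibility is enough here is, modulo these observations, already implicit in the proof of \fref{thm:mobilecover}; the genuinely new points are only that the patrol may be pushed off the boundary into $\mathrm{int}\,Q$ and that visibility need hold solely at its two endpoints. I expect this enumeration -- routine but not entirely mechanical -- to be the main obstacle.

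Granting the local claim, the non-crossing condition comes for free: for $i\ne j$ the patrols satisfy $s_i\subseteq\mathrm{int}\,Q_i$ and $s_j\subseteq\mathrm{int}\,Q_j$ with $\mathrm{int}\,Q_i\cap\mathrm{int}\,Q_j=\varnothing$, hence $s_i\cap s_j=\varnothing$; in particular no patrol passes through another. Collecting the $k\le\ff{n}$ guards obtained in this way completes the proof.
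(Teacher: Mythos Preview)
Your approach is the paper's: the corollary is not proved separately there, the authors simply remark that \Fref{thm:mobile} ``directly implies'' it, since each $\le 8$-vertex piece can be patrolled by one mobile guard. Your sketch supplies the details the paper leaves implicit, and forcing each patrol into the open interior of its piece so that disjointness of interiors yields non-crossing is exactly the right idea.

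There is, however, one slip in your case analysis. An S/Z-shaped octagon is \emph{not} in general star-shaped, so it does not belong on your list of shapes handled by a degenerate (single-point) patrol. Take for instance the polygon with vertex sequence $(0,3),(1,3),(1,2),(10,2),(10,0),(9,0),(9,1),(0,1)$: for a point at height $y\in(1,2)$, seeing $(0,3)$ forces $x\le 3-y<2$ while seeing $(10,0)$ forces $x\ge 10-y>8$, and the cases $y\le 1$ or $y\ge 2$ are worse still. The fix is immediate --- treat the S/Z as you treat the U, placing a horizontal patrol inside the middle strip so that each endpoint lies past one of the two reflex vertices and hence sees the corresponding arm together with the strip. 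With that correction your argument goes through.
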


The proof of \Fref{thm:mobile} is similar to the
proofs of \Fref{thm:static} in that it finds a suitable cut and then
uses induction on the parts created by the cut. However, here a cut
along a line segment connecting two reflex vertices is not automatically good.
In case we have no such cuts, we also rely heavily on a tree structure
of the polyomino (\Fref{sec:treestructure}). However, while O'Rourke's
proof only uses straight cuts, in our case this is not sufficient:
\Fref{fig:LisNeccessary} shows a polyomino of 14 vertices which cannot
be cut into 2 polyominoes of at most 8 vertices using cuts along
straight lines. Therefore we must consider L-shaped cuts too.
\figcap{LisNeccessary}{An L-shaped cut creating a partition into 8-vertex polyominoes}

Interested readers can find a thorough introduction to the subject of art gallery
problems in \citep{ORourke}.

\section{Definitions and preliminaries}

Let $P,P_1,P_2$ be simple polyominoes of $n,n_1,n_2$ vertices, respectively. If $P=P_1\cup P_2$, $\mathrm{int}(P_1)\cap \mathrm{int}(P_2)=\emptyset$, $0<n_1,n_2$ and $n_1+n_2\le n+2$ are satisfied, we say that $P_1,P_2$ form an admissible partition of $P$, which we denote by $P=P_1\dotcup P_2$. Also, we call $L=P_1\cap P_2$ a cut in this case. We may describe this relationship concisely by $L(P_1,P_2)$. If, say, we have a number of cuts $L_1,L_2,$ etc.,~then we usually write $L_i(P_1^i,P_2^i)$. Generally, if a subpolyomino is denoted by $P_x^y$, then $y$ refers to a cut and $x\in\{1,2\}$ is the label of the piece in the partition created by said cut.
Furthermore, if
	\begin{equation}\label{eq:n1n2}
    \ff{n_1}+\ff{n_2}\le \ff{n}.
  \end{equation}
is also satisfied, we say that $P_1,P_2$ form an induction-good partition of $P$, and we call $L$ a \textbf{good~cut}.
  
\begin{lemma}\label{lemma:tech}
 	An admissible partition $P=P_1\dotcup P_2$ is also and induction-good partition if \\
 	\textbf{\mbox{}\quad (a)}\makeatletter\edef\@currentlabel{\arabic{theorem}(a)}\makeatother\label{lemma:tech:a} $n_1+n_2= n+2$ and $n_1\equiv 2,8,\text{ or }14 \pmod{16}$, \\
 	\mbox{}\qquad\qquad\textbf{or}\\
 	\textbf{\mbox{}\quad (b)}\makeatletter\edef\@currentlabel{\arabic{theorem}(b)}\makeatother\label{lemma:tech:b} $n_1+n_2= n$ and $n_1\equiv 0,2,6,8,12,\text{ or }14 \pmod{16}$, \\
 	\mbox{}\qquad\qquad\textbf{or}\\
 	\textbf{\mbox{}\quad (c)}\makeatletter\edef\@currentlabel{\arabic{theorem}(c)}\makeatother\label{lemma:tech:c} $n\not\equiv 14\mod 16$ and either
 	$$n_1+n_2=n\text{ and }n_1\equiv 10 \pmod{16}\textbf{\quad or \quad}
 	n_1+n_2=n+2\text{ and }n_1\equiv 12 \pmod{16}.$$
\end{lemma}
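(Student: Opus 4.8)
The plan is to reduce everything to elementary arithmetic of the floor function. Write $n = 16q + r$ with $r \in \{0, 2, 4, \dots, 14\}$ (recall $n$ is even), and note that $\ff{n} = q + \ff{r}$, where $\ff{r}$ takes the value $0$ for $r \in \{0,2\}$ and $1$ for $r \in \{4,6,\dots,14\}$; more precisely $\ff{n}$ jumps by exactly $1$ as $r$ passes each of the thresholds $r=4$ and... wait—let me recompute: $\lfloor(3r+4)/16\rfloor = 0$ for $3r+4 < 16$, i.e.\ $r < 4$, so $r\in\{0,2\}$; it equals $1$ for $16 \le 3r+4 < 32$, i.e.\ $4 \le r < 28/3 \approx 9.33$, so $r \in \{4,6,8\}$; it equals $2$ for $32 \le 3r+4 < 48$, i.e.\ $r \in \{10,12,14\}$. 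Hence writing $g(r) := \lfloor(3r+4)/16\rfloor$ we have $g(r)=0$ on $\{0,2\}$, $g(r)=1$ on $\{4,6,8\}$, $g(r)=2$ on $\{10,12,14\}$. The first step is therefore to record that $\ff{n} = q + g(r)$ and that inequality~\eqref{eq:n1n2} is equivalent to $g(r_1) + g(r_2) \le g(r) + 16\lfloor (n-n_1-n_2+\text{carry})/16\rfloor$-type bookkeeping; cleanly, if $n_1 + n_2 = n$ then $q_1 + q_2 + (\text{carry from } r_1 + r_2) = q$, and if $n_1 + n_2 = n + 2$ then the same with $n+2$ in place of $n$.

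The key step is the following reformulation. In case~(b), $n_1 + n_2 = n$, so $r_1 + r_2 \equiv r \pmod{16}$ and $q_1 + q_2 = q - c$ where $c \in \{0,1\}$ is the carry (i.e.\ $c = 1$ iff $r_1 + r_2 \ge 16$). Then \eqref{eq:n1n2} becomes $g(r_1) + g(r_2) \le g(r) + c$. One checks this holds for every admissible pair $(r_1, r_2)$ with $r_1$ in the stated residue classes $\{0,2,6,8,12,14\}$: these are exactly the residues $r_1$ with $g(r_1) \in \{0,1,2\}$ "on the low side of its block", so that $g(r_1) = g(r_1 \bmod {\text{block}})$ leaves enough room — concretely, for $r_1 \in \{0,2\}$ we have $g(r_1)=0$ and the claim is $g(r_2) \le g(r)+c$ with $r_2 \equiv r - r_1$, which is immediate; for $r_1 \in \{6,8\}$, $g(r_1)=1$, and one verifies $1 + g(r_2) \le g(r)+c$ by splitting on whether $r_2 < 16 - r_1$; similarly for $r_1 \in \{12,14\}$ with $g(r_1)=2$. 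Case~(a) is the same computation shifted: $n_1 + n_2 = n+2$ means $r_1 + r_2 \equiv r + 2 \pmod{16}$ and $q_1 + q_2 = q + \lfloor(r+2)/16\rfloor^{\!*} - c'$; the admissible residues $\{2,8,14\}$ for $n_1$ are precisely those where $g(r_1) = g(r_1 - 2) $ fails to help but the $+2$ in $n_1 + n_2 = n+2$ compensates — i.e.\ each of $2,8,14$ sits one step past a threshold of $g$, so $g(r_1) = g(r_1 - 2) + \mathbf{1}[r_1 \in \{4,10\}\text{ shifted}]$; the arithmetic is routine once the carries are tabulated. Case~(c) is the residual clean-up: when $r_1 \equiv 10$ with $n_1 + n_2 = n$ we have $g(r_1) = 2$ and we need $2 + g(r_2) \le g(r) + c$, which fails only if it would force $g(r) = 2$, i.e.\ $r \in \{10,12,14\}$; excluding $r \equiv 14$ is exactly the hypothesis (the remaining bad sub-cases $r \in \{10,12\}$ get absorbed by the carry $c=1$, since $r_1 = 10$ forces $r_2 \ge 4$ hence... — this is the delicate spot), and the $n_1 \equiv 12$, $n_1 + n_2 = n+2$ sub-case is handled dually.

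The main obstacle, then, is not conceptual but bookkeeping: one must correctly track the carry bit $c$ (whether $r_1 + r_2$ crosses $16$) jointly with whether the sum lands in the case $n_1 + n_2 = n$ or $n+2$, and verify in each of the finitely many residue pairs that $g(r_1) + g(r_2)$ does not exceed $g(n_1 + n_2 \bmod 16) + c$. I would organize this as a single table indexed by $r_1 \bmod 16$ (only the values listed in (a), (b), (c) need appear) against $r_2 \bmod 16 \in \{0,2,\dots,14\}$, with the $r \not\equiv 14$ restriction in (c) flagged in the two offending cells. The cleanest presentation is probably to prove the master inequality $g(a) + g(b) \le g\big((a+b)\bmod 16\big) + \mathbf{1}[a + b \ge 16]$ for all even $a, b \in [0,14]$ subject to $a$ being in the union of the allowed classes (which is a $6 \times 8$ check, with three exceptional entries that the case-(c) hypothesis rules out), and then note that case~(a) and the $n+2$ half of case~(c) follow by applying this with $(a,b) = (r_1, r_2)$ where now $a + b$ is taken modulo $16$ after adding the discrepancy — i.e.\ observe $\ff{n+2} \ge \ff{n}$ always, and $\ff{n+2} = \ff{n}$ unless $n \equiv 14 \pmod{16}$, so the $n_1 + n_2 = n+2$ rows reduce to $n_1 + n_2 = n$ rows except when $n \equiv 14$, which is precisely why (c)'s $n+2$ branch carries the extra hypothesis.
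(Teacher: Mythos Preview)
Your approach is exactly what the paper does: it says ``the proof reduces to an easy case-by-case analysis, which we leave to the reader,'' and you are carrying out that analysis. So there is no divergence in strategy.

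However, there is an arithmetic slip that propagates. You write $\ff{n} = q + g(r)$, but in fact
\[
\left\lfloor\frac{3(16q+r)+4}{16}\right\rfloor \;=\; 3q + g(r),
\]
since $0\le 3r+4\le 46<48$. Consequently, in case~(b) the reduced inequality is not $g(r_1)+g(r_2)\le g(r)+c$ but
\[
g(r_1)+g(r_2)\;\le\;g(r)+3c,
\]
where $c\in\{0,1\}$ is the carry. This is \emph{weaker} than what you wrote, so the plan still works --- indeed it becomes easier, because whenever $c=1$ the right-hand side is at least~$3$ and the left is at most~$4$, which fails only if both $r_1,r_2\ge 10$; but then $r=r_1+r_2-16\ge 4$ forces $g(r)\ge 1$. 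With your stated bound $g(r)+c$ the table would actually fail (e.g.\ $r_1=14$, $r_2=10$ in case~(b)), so this correction is needed, not cosmetic.

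There is also a slip in your final paragraph: $\ff{n+2}=\ff{n}$ holds unless $n\equiv 2,8,$ or $14\pmod{16}$, not only when $n\equiv 14$. (These are precisely the residues appearing in~(a).) This does not damage the outcome for case~(c), since a direct check with the corrected $3c$ shows that $r_1=12$, $n_1+n_2=n+2$ fails only at $r=14$; but the reduction you sketch from the $n{+}2$ rows to the $n$ rows is not valid as stated. Once you fix the $3q$ coefficient and either run the $8\times 8$ table directly or repair the reduction, the argument goes through.
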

\begin{proof}
	Using the fact that the floor function satisfies the triangle inequality, the proof reduces to an easy case-by-case analysis, which we leave to the reader.
\end{proof}

Any cut $L$ in this paper falls into one of the following 3 categories (see \Fref{fig:cuts}):
\begin{enumerate}[\bf (a)]
  \item \textbf{1-cuts:} $L$ is a line segment, and exactly one of its
  endpoints is a (reflex) vertex of $P$.

  \item \textbf{2-cuts:} $L$ is a line segment, and both of its
  endpoints are (reflex) vertices of $P$.

  \item \textbf{L-cuts:} $L$ consists of two connected line
  segments, and both endpoints of $L$ are (reflex) vertices of $P$.
\end{enumerate}

Note that for 1-cuts and L-cuts the size of the parts satisfy $n_1+n_2=n+2$, while for 2-cuts we have $n_1+n_2=n$.

%

\begin{figure}[H]
	\centering
	\begin{subfigure}{.50\textwidth}
		\renewcommand{\thesubfigure}{a}
		\centering
		\includefigure{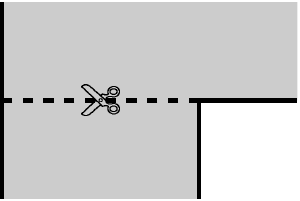}
		\caption{1-cut}
	\end{subfigure}%
	\begin{subfigure}{.50\textwidth}
		\renewcommand{\thesubfigure}{b1}
		\centering
		\includefigure{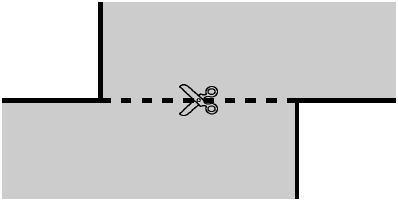}
		\caption{2-cut}
	\end{subfigure}
	
	\bigskip
	
	\begin{subfigure}{.50\textwidth}
		\renewcommand{\thesubfigure}{b2}
		\centering
		\includefigure{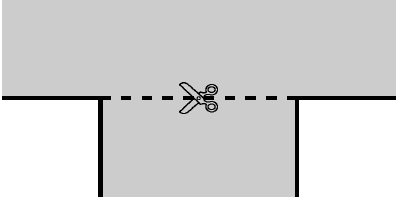}
		\caption{2-cut}
	\end{subfigure}%
	\begin{subfigure}{.50\textwidth}
		\renewcommand{\thesubfigure}{c}
		\centering
		\includefigure{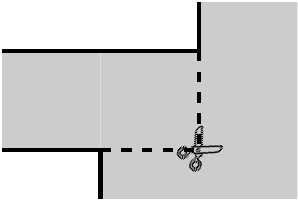}
		\caption{L-cut}
	\end{subfigure}
	\caption{Examples for all types of cuts. Light gray areas are subsets of $\mathrm{int}(P)$.}\label{fig:cuts}
\end{figure}

In the proof of \Fref{thm:mobile} we are searching for an induction-good partition of $P$. As a good cut defines an induction-good partition, it is sufficient to find a good cut. We could hope that a good cut of a subpolyomino of $P$ is extendable to a good cut of $P$, but unfortunately a good cut of a subpolyomino may only be an admissible cut with respect to $P$ (if it is a cut of $P$ at all). \Fref{lemma:tech}, however, allows us to look for cut-systems containing a good cut. 
Fortunately, it is sufficient to consider non-crossing, nested cut-systems of at most 3 cuts, defined as follows.

\begin{definition}[Good cut-system]\label{def:goodcutsystem} 
The cuts $L_1(P_1^1,P_2^1)$, $L_2(P_1^2,P_2^2)$ and $L_3(P_1^3,P_2^3)$ (possibly $L_2=L_3$) constitute a good cut-system if $P_1^1\subset P_1^2\subseteq P_1^3$, and the set 
	$$\left\{n(P_1^i)\ |\ i\in\{1,2,3\}\right\}\cup \left\{n(P_1^i)+2\ |\ i\in\{1,2,3\}\text{ and }L_i\text{ is a 2-cut}\right\}$$
	contains three consecutive even elements modulo 16 (ie., the union of their residue classes contains a subset of the form $\{a,a+2,a+4\}+16\mathbb{Z}$). If this is the case we also define their kernel as $\ker\{L_1,L_2,L_3\}=(P_1^1-L_1)\cup (P_2^3-L_3)$, which will be used in \Fref{lemma:extendconsecutives}.
\end{definition}

\Fref{lemma:tech:a} and \ref{lemma:tech:b} immediately yield that any good cut-system contains a good cut.

\begin{remark}\label{rem:invert}
	It is easy to see that if a set of cuts satisfies this definition, then they obviously satisfy it in the reverse order too (the order of the generated parts are also switched).
	Actually, these are exactly the two orders in which they do so. Thus the kernel is well-defined, and when speaking about a good cut-system it is often enough to specify the set of participating cuts.
\end{remark}



\medskip



\section{Tree structure}\label{sec:treestructure}


Any reflex vertex of a polyomino defines a (1- or 2-) cut along a horizontal line segment whose interior is contained in $\mathrm{int}(P)$ and whose
endpoints are the reflex vertex and another point on the boundary of
the polyomino. Next we define a graph structure derived from $P$, which is a standard tool in the literature, for example it is called the $R$-graph of $P$ in \citep{GyH}. 

\medskip

\begin{definition}[Horizontal cut tree]\label{def:tree}
  The horizontal cut tree $T$ is obtained as follows. First, partition $P$ into a set of rectangles by cutting along all of the horizontal cuts of $P$. Let $V(T)$, the vertex set of $T$ be the set of resulting (internally disjoint) rectangles. Two rectangles of $T$ are connected by an edge in $E(T)$ iff their boundaries intersect.
\end{definition}

\medskip

The graph $T$ is indeed a tree as its connectedness is trivial and since any cut creates two internally disjoint polyominoes, $T$ is also cycle-free. We can think of $T$ as a sort of dual of the planar graph determined by $P$ and its horizontal cuts. The nodes of $T$ represent rectangles of $P$ and edges of $T$ represent horizontal 1- and 2-cuts of $P$. For this reason we may refer to nodes of $T$ as rectangles. This nomenclature also helps in distinguishing between vertices of $P$ (points) and nodes of $T$.  Moreover, for an edge $e\in E(T)$, we may denote the cut represented by $e$ by simply $e$, as the context should make it clear whether we are working in the graph $T$ or the polyomino $P$ itself.

\medskip

Note that the vertical sides of rectangles are also edges of the polyomino. We will also briefly use vertical cut trees, which can be defined analogously.

%

\begin{definition}\label{def:tfunc}
  Let $T$ be the horizontal cut tree of $P$. Define $t:E(T)\to \mathbb{N}$ as follows:
  given any edge $\{R_1,R_2\}\in E(T)$, let
  $$t(\{R_1,R_2\})=n(R_1\cup R_2)-8.$$
\end{definition}

Observe that
$$t(e)=\left\{
\begin{array}{rl}
 0, &\text{ if $e$ represents a 2-cut;} \\
-2, &\text{ if $e$ represents a 1-cut.} \\
\end{array}
\right.$$
The following claim is used throughout the paper to count the number of vertices of subpolyominoes.
\begin{claim}\label{claim:tsize} Let $T$ be the horizontal cut tree of $P$. Then
  $$n(P)=4|V(T)|+\sum_{e\in E(T)} t(e).$$
\end{claim}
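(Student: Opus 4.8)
The plan is to prove the identity by induction on the number $|V(T)|$ of rectangles. The base case $|V(T)|=1$ is immediate: then $P$ itself is a rectangle, $n(P)=4=4\cdot 1$, and the sum over $E(T)=\emptyset$ is $0$.

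For the inductive step, suppose $|V(T)|\ge 2$ and fix a leaf $R$ of $T$, together with its unique incident edge $e=\{R,R_2\}\in E(T)$. Write $T'=T-R$ and let $P'$ be the union of the rectangles in $V(T')=V(T)\setminus\{R\}$. I would first record the geometric facts that make this useful. Since $T'$ is connected, $P'$ is connected; since the rectangles of $V(T)$ are pairwise internally disjoint and $R$'s boundary meets only $R_2$'s (rectangles with intersecting boundaries are precisely the adjacent pairs of $T$), we get $R\cap P'=R\cap R_2=e$, so that $P=R\dotcup P'$ is an admissible partition whose cut is $e$; in particular $P'$ is again a simple polyomino. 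Finally, cutting along $e$ creates no new reflex vertex --- at each endpoint of $e$ the cut either straightens an edge into one piece and leaves a convex corner in the other (the reflex endpoint) or leaves a convex corner in both pieces (the non-vertex endpoint of a $1$-cut) --- and from this one checks that the horizontal cut tree of $P'$ is exactly $T'$, with each edge representing the same cut and hence carrying the same value of $t$.

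Granting the above, the computation is short. Because $e$ is a $1$-cut or a $2$-cut of $P$, the vertex counts satisfy $n(R)+n(P')=n(P)$ when $e$ is a $2$-cut and $n(R)+n(P')=n(P)+2$ when $e$ is a $1$-cut; comparing with the two values $t(e)=0$ and $t(e)=-2$ noted after \Fref{def:tfunc}, both cases are summarised by $n(P)=n(R)+n(P')+t(e)$. Using $n(R)=4$ and the induction hypothesis applied to $P'$,
$$n(P)=4+\Bigl(4|V(T')|+\sum_{e'\in E(T')}t(e')\Bigr)+t(e)=4|V(T)|+\sum_{e'\in E(T)}t(e'),$$
since $|V(T')|=|V(T)|-1$ and $E(T')=E(T)\setminus\{e\}$. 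This completes the induction.

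The only genuinely non-formal point is the geometric bookkeeping in the second paragraph: that removing a leaf of the horizontal cut tree amounts to slicing off a rectangle along a single horizontal cut, and that the remaining polyomino is presented by the smaller tree. Everything else is arithmetic. One could instead avoid the induction entirely by writing $4|V(T)|=\sum_{R\in V(T)}n(R)$ and correcting, for each edge of $T$, for the rectangle corners that are not vertices of $P$ and for the genuine vertices of $P$ that get counted twice; this, however, comes down to the same local analysis at the endpoints of the cuts and is no shorter.
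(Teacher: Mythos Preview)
Your induction on $|V(T)|$ is correct, and the bookkeeping in the second paragraph is exactly what is needed: removing a leaf rectangle along its unique incident cut creates no new reflex vertex in the remaining piece, so the smaller polyomino has horizontal cut tree $T-R$ with unchanged $t$-values on the surviving edges; the arithmetic $n(P)=4+n(P')+t(e)$ then closes the loop. The paper itself dismisses the claim with ``the proof is straightforward'' and gives no argument, so there is nothing substantive to compare against --- your write-up is a clean instantiation of what the authors left implicit, and the alternative double-counting sketch you mention at the end is the other natural route.
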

\begin{proof}
	The proof is straightforward. 
%
\end{proof}
\begin{remark}\label{rem:refine}
	The equality still holds even if some of the rectangles of $T$ are cut into several rows (and the corresponding edges, for which the function $t$ takes $-4$, are added to $T$). 
\end{remark}

\subsection{Extending cuts and cut-systems}
The following two technical lemmas considerably simplify our analysis in \Fref{sec:proof}, where many cases distinguished by the relative positions of reflex vertices of $P$ on the boundary of a rectangle need to be handled. For a rectangle $R$ let us denote its top left, top right, bottom left, and bottom right vertices with $v_{TL}(R)$, $v_{TR}(R)$, $v_{BL}(R)$, and $v_{BR}(R)$, respectively.

\begin{figure}[H]
	\centering
	\begin{subfigure}{.33\textwidth}
		\centering
		\includefigure{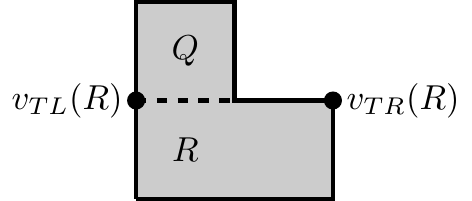}
		\caption{$Q\subseteq R_{TL}$, and $e_{TL}(R)$ is a 1-cut}
		\label{fig:RTL:a}
	\end{subfigure}%
	\begin{subfigure}{.34\textwidth}
		\centering
		\includefigure{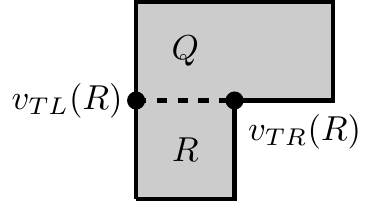}
		\caption{$Q\subseteq R_{TL}$, $e_{TL}(R)$ is a 1-cut, and $R_{TR}=\emptyset$}
		\label{fig:RTL:b}
	\end{subfigure}%
	\begin{subfigure}{.33\textwidth}
		\centering
		\includefigure{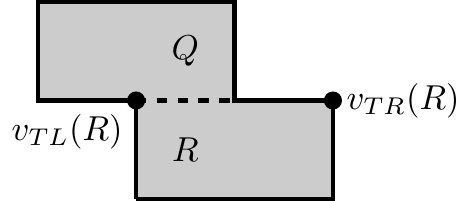}
		\caption{$Q\subseteq R_{TL}$, and $e_{TL}(R)$ is a 2-cut}
		\label{fig:RTL:c}
	\end{subfigure}%
	
	\bigskip
	
	\begin{subfigure}{.33\textwidth}
		\centering
		\includefigure{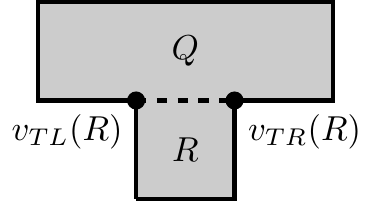}
		\caption{$R_{TL}=R_{TR}=\emptyset$, $R$ is either a corridor or a pocket (see \Fref{sec:proof})}
		\label{fig:RTL:d}
	\end{subfigure}%
	\begin{subfigure}{.33\textwidth}
		\centering
		\includefigure{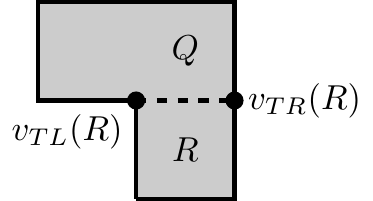}
		\caption{$R_{TL}=\emptyset$, but $Q\subseteq R_{TR}$}
		\label{fig:RTL:e}
	\end{subfigure}
		\caption{$R\cup Q$ in all essentially different relative positions of $R,Q\in V(T)$, where $\{R,Q\}\in E(T)$ and $v_{TL}(R)\in Q$}\label{fig:RTL}
\end{figure}

\begin{definition}\label{rectvertexns}
  Let $R,Q\in V(T)$ be arbitrary. We say that $Q$ is adjacent to $R$
  at $v_{TL}(R)$, if $v_{TL}(R)\in Q$ and $v_{TL}(R)$ is not a vertex of the polyomino $R\cup Q$, or $v_{TR}(R)\notin Q$. Such situations are depicted on Figures \ref{fig:RTL:a}, \ref{fig:RTL:b}, and \ref{fig:RTL:c}. However, in the case of Figure \ref{fig:RTL:d} and \ref{fig:RTL:e} we have $v_{TR}(R)\in Q\not\subseteq R_{TL}\;(=\emptyset)$.
  
  \medskip
  
  If $Q$ is adjacent to $R$ at $v_{TL}(R)$, let $e_{TL}(R)=\{R,Q\}$; by cutting $P$ along the dual of $e_{TL}(R)$, ie., $R\cap Q$, we get two polyominoes, and we denote the part containing $Q$ by $R_{TL}$. If there is no such $Q$, let $e_{TL}(R)=\emptyset$ and $R_{TL}=\emptyset$. These relations can be defined analogously for top right ($R_{TR}$, $e_{TR}(R)$), bottom left ($R_{BL}$, $e_{BL}(R)$), and bottom right ($R_{BR}$, $e_{BR}(R)$).
\end{definition}

\begin{lemma}\label{lemma:extend}
  Let $R$ be an arbitrary rectangle such that $R_{BL}\neq\emptyset$. Let
  $U$ be the remaining portion of the polyomino, ie., $P=R_{BL}\dotcup U$ is
  a polyomino-partition. Take an admissible polyomino-partition $U=U_1\dotcup U_2$ such that $v_{BL}(R)\in U_1$.  We can extend this to an admissible polyomino-partition of $P$ where the two parts are $U_1\cup R_{BL}$ and $U_2$.
\end{lemma}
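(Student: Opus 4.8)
The plan is to verify directly that $P=(U_1\cup R_{BL})\dotcup U_2$ is an admissible partition, i.e.\ that $U_1\cup R_{BL}$ and $U_2$ are simple polyominoes with disjoint interiors and $n(U_1\cup R_{BL})+n(U_2)\le n(P)+2$. Write $S:=R_{BL}\cap U$ for the cut dual to $e_{BL}(R)$ — a horizontal $1$- or $2$-cut of $P$ one of whose endpoints is $v_{BL}(R)$ — and $C:=U_1\cap U_2$. Admissibility of $U=U_1\dotcup U_2$ gives $d:=n(U_1)+n(U_2)-n(U)\le 2$, and the size relation noted after the classification of cuts gives $c:=n(R_{BL})+n(U)-n(P)\in\{0,2\}$ (namely $0$ if $S$ is a $2$-cut and $2$ if it is a $1$-cut). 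Combining these,
\[
n(U_1\cup R_{BL})+n(U_2)=n(P)+c+d-e ,
\]
where $e:=n(U_1)+n(R_{BL})-n(U_1\cup R_{BL})\ge 0$ is the number of vertices lost in gluing $U_1$ to $R_{BL}$ along $U_1\cap R_{BL}$; so the vertex condition is equivalent to $e\ge c+d-2$.

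First I would locate $U_1\cap R_{BL}=U_1\cap S$. As $\mathrm{int}(C)\subseteq\mathrm{int}(U)$ is disjoint from $\partial U\supseteq S$, the cut $C$ meets $S$ only in endpoints of $C$; since no reflex vertex of $U$ lies in the relative interior of $S$, the only endpoint of $C$ that can land there is the non-vertex endpoint of a $1$-cut $C$. Because $v_{BL}(R)\in U_1\cap S$ and $S$ is connected, $U_1\cap S$ is therefore a subsegment $S'$ of $S$ from $v_{BL}(R)$ to a point $p$, where $p$ is either the second endpoint of $S$ or that non-vertex endpoint of $C$. A short local analysis at $v_{BL}(R)$ — the upper-left quadrant lies outside $P$, the lower-right quadrant lies in $R_{BL}$, and the lower-left quadrant cannot lie in $U$ without pinching $U$ — yields $\mathrm{ang}_U(v_{BL}(R))=\pi/2$, hence $\mathrm{ang}_{U_1}(v_{BL}(R))=\pi/2$ and $\mathrm{ang}_{R_{BL}}(v_{BL}(R))\le\pi$; a comparable bound holds at $p$. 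These bounds show that gluing $U_1$ to $R_{BL}$ along $S'$ produces no pinch, so $U_1\cup R_{BL}$ is a simple polyomino; and since $\mathrm{int}(R_{BL})\cap\mathrm{int}(U)\subseteq\mathrm{int}(S)=\emptyset$ while $\mathrm{int}(U_1)\cap\mathrm{int}(U_2)=\emptyset$, the interiors of $U_1\cup R_{BL}$ and $U_2$ are disjoint.

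Next I would check $e\ge c+d-2$. If $c=0$ this is immediate, so assume $c=2$, i.e.\ $S$ is a $1$-cut; I must then show $e\ge d$. If $S'=S$, the endpoint $b$ of $S$ which is not its reflex vertex is a non-vertex of $P$, so the angles of $U_1$ and of $R_{BL}$ at $b$ are both $\pi/2$ and $b$ alone contributes $2$ to $e$, whence $e\ge 2\ge d$. If $S'\subsetneq S$, then $C$ is a $1$-cut with non-vertex endpoint $p$, and the cut of $P$ corresponding to $P=(U_1\cup R_{BL})\dotcup U_2$ is the L-shape obtained by appending to $C$ the portion of $S$ beyond $p$; its two endpoints are the reflex vertex generating $C$ and the second endpoint $b$ of $S$. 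If $b$ is a reflex vertex of $P$, this is a genuine L-cut and the count equals $n(P)+2$; if $b$ were a non-vertex of $P$, the count would be $n(P)+4$.

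The remaining point — and the real crux — is to exclude the case $S'\subsetneq S$ with $b$ a non-vertex of $P$, i.e.\ the case that $v_{BL}(R)$ is the \emph{reflex} endpoint of the $1$-cut $S$. Here the precise wording of Definition~\ref{rectvertexns} does the work: if $v_{BL}(R)$ is the reflex endpoint of its horizontal cut, then near $v_{BL}(R)$ the polyomino $P$ occupies every quadrant except the upper-left one, which forces the bottom edge of $R$ to be exactly $S$ (so $v_{BR}(R)\in Q$) and forces $Q$ to extend past $v_{BL}(R)$ on the left (so $v_{BL}(R)\in V(R\cup Q)$); both clauses in the definition of ``$Q$ is adjacent to $R$ at $v_{BL}(R)$'' then fail, contradicting $R_{BL}\neq\emptyset$. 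I expect this exclusion, and more generally the bookkeeping of which endpoint of $S$ carries the reflex vertex, to be the part of the proof requiring the most care.
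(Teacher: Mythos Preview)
Your proof is correct and follows essentially the same approach as the paper's: both verify admissibility directly, and both hinge on the observation (drawn from Definition~\ref{rectvertexns}) that when $e_{BL}(R)$ is a $1$-cut the vertex $v_{BL}(R)$ cannot be its reflex endpoint. The only real difference is bookkeeping: the paper expresses the vertex count via the $t$-function and Claim~\ref{claim:tsize}, obtaining $n(U_1\cup R_{BL})+n(U_2)-n(P)=\big(n(U_1)+n(U_2)-n(U)\big)-t(\{R,Q_2\})+t(\{Q_1,Q_2\})$ and then checking $t(\{Q_1,Q_2\})\le t(\{R,Q_2\})$ in one line, whereas you unfold this into the explicit $c,d,e$ accounting and a case split on $S'=S$ versus $S'\subsetneq S$.
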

\begin{proof}
  Let $Q_1=R\cap U_1$ and let $Q_2\in V(T)$ be the rectangle which is a subset of $R_{BL}$ and adjacent to $R$.

  \medskip

  Observe that $U_1$ and $R_{BL}$ only intersect on $R$'s bottom
  side, therefore their intersection is a line segment $L$ and so
  $U_1\cup R_{BL}$ is a polyomino. Trivially, $P=(U_1\cup R_{BL})\dotcup U_2$ is partition into polyominoes, so only  admissibility remains to be checked.
  
  \medskip
  
  Let the horizontal cut tree of $U_1$ and $R_{BL}$ be $T_{U_1}$ and $T_{R_{BL}}$, respectively.  The horizontal cut tree of $U_1\cup R_{BL}$ is $T_{U_1}\cup T_{R_{BL}}+\{Q_1,Q_2\}$, except if $t(\{Q_1,Q_2\})=-4$. Either way, by referring to \Fref{rem:refine} we can use \Fref{claim:tsize} to write that
    \begin{align}\label{eq:newsize}
      n(U_1\cup R_{BL})&+n(U_2)-n(P)=n(U_1)+n(R_{BL})+t(\{Q_1,Q_2\})+n(U_2)-n(P)=\nonumber\\
      &=\Big(n(U_1)+n(U_2)-n(U)\Big)+\Big(n(U)+n(R_{BL})-n(P)\Big)+t(\{Q_1,Q_2\})=\\
      &=(n(U_1)+n(U_2)-n(U))-t(\{R,Q_2\})+t(\{Q_1,Q_2\}).\nonumber
    \end{align}
  Now it is enough to prove that $t(\{Q_1,Q_2\})\le t(\{R,Q_2\})$. If $t(\{R,Q_2\})=0$ this is trivial. The remaining case is when $t(\{R,Q_2\})=-2$. This means that $v_{BL}(R)$ is not a vertex of $R\cup Q_2$, therefore it is not a vertex of $Q_1\cup Q_2$ either, implying that $n(Q_1\cup Q_2)<8$.
\end{proof}

\begin{lemma}\label{lemma:extendconsecutives}
  Let $R\in V(T)$ be such that $R_{BL}\neq\emptyset$. Let $U$ be the other half of the polyomino, ie., $P=R_{BL}\dotcup U$. If $U$ has a good cut-system $\mathcal{L}$ such that $v_{BL}(R)\in\ker\mathcal{L}$ then $P$ also has a good cut-system.
\end{lemma}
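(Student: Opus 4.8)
The plan is to reduce the statement for $P$ to the statement for $U$ by extending each of the three cuts in $\mathcal L$ across $R_{BL}$, exactly as in \Fref{lemma:extend}. Write $\mathcal L=\{L_1,L_2,L_3\}$ with $L_i(P_1^i,P_2^i)$ and $P_1^1\subset P_1^2\subseteq P_1^3$ as in \Fref{def:goodcutsystem}, all inside $U$. Since $v_{BL}(R)\in\ker\mathcal L=(P_1^1-L_1)\cup(P_2^3-L_3)$, the point $v_{BL}(R)$ lies either in $P_1^1$ (hence in every $P_1^i$) or in $P_2^3$ (hence in every $P_2^i$). By \Fref{rem:invert} we may assume, reversing the order of the system if necessary, that $v_{BL}(R)\in P_1^i$ for all $i$.

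Now for each $i$ apply \Fref{lemma:extend} with $U_1=P_1^i$, $U_2=P_2^i$: since $v_{BL}(R)\in P_1^i$, we obtain an admissible polyomino-partition $P=(P_1^i\cup R_{BL})\dotcup P_2^i$; call this cut $L_i'$ with $\widetilde P_1^i:=P_1^i\cup R_{BL}$ and $\widetilde P_2^i:=P_2^i$. The containments $\widetilde P_1^1\subset\widetilde P_1^2\subseteq\widetilde P_1^3$ are immediate from $P_1^1\subset P_1^2\subseteq P_1^3$ (adding the same region $R_{BL}$ to each), and if $L_2=L_3$ then $L_2'=L_3'$, so $\mathcal L':=\{L_1',L_2',L_3'\}$ is again a nested system. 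It remains to check the residue condition of \Fref{def:goodcutsystem}. From the computation in the proof of \Fref{lemma:extend} (equation~\eqref{eq:newsize}) we get
\begin{equation*}
  n(\widetilde P_1^i)=n(P_1^i)+n(R_{BL})+t(\{Q_1^i,Q_2\})=n(P_1^i)+\big(n(P)-n(U)\big)+\delta_i,
\end{equation*}
where $\delta_i:=t(\{Q_1^i,Q_2\})-t(\{R,Q_2\})\in\{0,-2\}$, and moreover $\delta_i=-2$ forces $n(\widetilde P_1^i)=n(Q_1^i\cup R_{BL}\text{-part})$ to be such that the gap is compensated — more precisely, if $\delta_i=-2$ then $v_{BL}(R)$ is not a vertex of $\widetilde P_1^i$, so $L_i'$ is a 1-cut there; and likewise $L_i$ being a 2-cut vs. $L_i'$ being a 2-cut is governed by the same $\delta_i$. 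The key point is therefore that the multiset
$$\{n(\widetilde P_1^i)\}\cup\{n(\widetilde P_1^i)+2:L_i'\text{ is a 2-cut}\}$$
equals $\{n(P_1^i)\}\cup\{n(P_1^i)+2:L_i\text{ is a 2-cut}\}$ shifted by the constant $n(P)-n(U)$: when $\delta_i=0$ the shift is exactly $n(P)-n(U)$ and the cut type is unchanged; when $\delta_i=-2$ the value $n(P_1^i)+2$ (which was in the set because $L_i$ was a 2-cut) is replaced by $n(\widetilde P_1^i)+2=n(P_1^i)+n(P)-n(U)$, i.e.\ again the old value shifted by $n(P)-n(U)$, while the "non-$+2$" entry $n(\widetilde P_1^i)=n(P_1^i)-2+n(P)-n(U)$ plays the role formerly played by... — in all cases the new set is a translate by the fixed even constant $c:=n(P)-n(U)$ of the old one. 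Since $\mathcal L$ is good, the old set contains $\{a,a+2,a+4\}+16\mathbb Z$, hence the new one contains $\{a+c,a+c+2,a+c+4\}+16\mathbb Z$, so $\mathcal L'$ is a good cut-system for $P$.

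The main obstacle is exactly the bookkeeping in the last paragraph: one must verify that the passage from each $L_i$ to $L_i'$ shifts the \emph{relevant} vertex-count (the one that enters \Fref{def:goodcutsystem}, i.e.\ either $n(P_1^i)$ or $n(P_1^i)+2$ according to cut type) by the \emph{same} constant $c=n(P)-n(U)$, uniformly in $i$ — the subtlety being the $\delta_i=-2$ case, where $L_i$ is a 2-cut but $L_i'$ degenerates to a 1-cut, so that the "$+2$" is absorbed into the vertex count itself. Once this is seen to produce a global translation of the residue set by a fixed even number, the consecutive-triple condition is preserved automatically, and nestedness and admissibility are inherited from \Fref{lemma:extend}. (A careful write-up would just tabulate the two cases $\delta_i\in\{0,-2\}$ against the two cut types, which we omit here.)
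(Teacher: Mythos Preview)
Your strategy---extend each cut of $\mathcal L$ across $R_{BL}$ via \Fref{lemma:extend} and then verify that the extended system is still good---is exactly the paper's. The difference is that you track the \emph{growing} pieces $\widetilde P_1^i=P_1^i\cup R_{BL}$, whereas the paper tracks the \emph{unchanged} pieces $P_2^i=U_2^i$. On the $P_2$ side the residue check becomes almost immediate: since $n(P_2^i)=n(U_2^i)$ for every $i$, one only needs that ``$L_i$ a 2-cut $\Rightarrow L_i'$ a 2-cut'', and this follows from \eqref{eq:newsize} together with $t(\{Q_1,Q_2\})\le t(\{R,Q_2\})$ (if $n(P_1^i)+n(P_2^i)=n(P)+2$ then the defect must vanish, forcing $n(U_1^i)+n(U_2^i)=n(U)+2$). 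The new residue set on the $P_2$ side therefore \emph{contains} the old one, and \Fref{rem:invert} finishes the argument in one line.

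Your bookkeeping on the $P_1$ side, by contrast, contains a genuine error. The parenthetical ``which was in the set because $L_i$ was a 2-cut'' asserts that $\delta_i=-2$ forces $L_i$ to have been a 2-cut; this is false. The quantity $\delta_i=t(\{Q_1^i,Q_2\})-t(\{R,Q_2\})$ records only how $R_{BL}$ attaches to $Q_1^i=R\cap P_1^i$ and is independent of the type of $L_i$: for instance, a vertical 1-cut of $U$ whose trace on $R$ happens to line up with the right endpoint of $R\cap Q_2$ gives $\delta_i=-2$ while $L_i$ remains a 1-cut. Consequently your claim that the new residue set \emph{equals} the old one translated by $c=n(P)-n(U)$ is too strong; the correct statement (dual, via \Fref{rem:invert}, to the paper's containment on the $P_2$ side) is only that it \emph{contains} such a translate. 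Your own ellipsis at ``plays the role formerly played by\dots'' is exactly where the case analysis fails to close; switching to the $P_2$ side removes the difficulty entirely.
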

\begin{proof}
  Let us enumerate the elements of $\mathcal{L}$ as $L_i$ where $i\in I$. Take $L_i(U_1^i,U_2^i)$ such that $v_{BL}(R)\in U_1^i$. Using \Fref{lemma:extend} extend $L_i$ to a cut $L_i'(P_1^i,P_2^i)$ of $P$ such that $U_2^i=P_2^i$. 
  
  \medskip
  
  \Fref{eq:newsize} and the statement following it implies that $n(P_1^i)+n(P_2^i)=n(P)+2\implies n(U_1^i)+n(U_2^i)=n(U)+2$. In other words, if $L_i$ is a 2-cut then so is $L_i'$. Therefore
  \begin{align*}
	  \left\{n(U_2^i)\ |\ i\in I\right\}&\cup \left\{n(U_2^i)+2\ |\ i\in I\text{ and }L_i\text{ is a 2-cut}\right\}\subseteq \\
	  &\subseteq \left\{n(P_2^i)\ |\ i\in I\right\}\cup \left\{n(P_2^i)+2\ |\ i\in I\text{ and }L_i'\text{ is a 2-cut}\right\}
  \end{align*}
  and by referring to \Fref{rem:invert} we get that $\{L_i'\ |\ i\in I\}$ is a good cut-system of $P$.
\end{proof}

\section{Proof of \texorpdfstring{\Fref{thm:mobile}}{Theorem~3}}\label{sec:proof}

\newcounter{case}
\setcounter{case}{0}
\renewcommand{\thecase}{Case~\arabic{case}}
\newcommand{\casemark}[1]{}
\makeatletter
\newcommand\case{\@startsection{case}{2}{\z@}%
	{12\p@ \@plus 6\p@ \@minus 3\p@}%
	{3\p@ \@plus 6\p@ \@minus 3\p@}%
	{\normalfont\normalsize\itshape\bfseries\boldmath}}

\newcounter{subcase}[case]
\setcounter{subcase}{0}
\renewcommand{\thesubcase}{Case~\arabic{case}.\arabic{subcase}}%
\newcommand{\subcasemark}[1]{}
\newcommand\subcase{\@startsection{subcase}{3}{\z@}%
	{12\p@ \@plus 6\p@ \@minus 3\p@}%
	{\p@}%
	{\normalfont\normalsize\itshape\bfseries\boldmath}}

\newcounter{subsubcase}[subcase]
\setcounter{subsubcase}{0}
\renewcommand{\thesubsubcase}{Case~\arabic{case}.\arabic{subcase}.\arabic{subsubcase}}%
\newcommand{\subsubcasemark}[1]{}
\newcommand\subsubcase{\@startsection{subsubcase}{3}{\z@}%
	{12\p@ \@plus 6\p@ \@minus 3\p@}%
	{\p@}%
	{\normalfont\normalsize\itshape\bfseries\boldmath}}

\newcommand{\toclevel@case}{2} 
\newcommand{\toclevel@subcase}{3} 
\newcommand{\toclevel@subsubcase}{4} 
\makeatother

We will prove \Fref{thm:mobile} by induction on the number of
vertices. For $n\le 8$ the theorem is trivial.

\medskip

For $n>8$, let $P$ be the polyomino which we want to partition into
smaller polyominoes. It is enough to prove that $P$ has a good cut. The rest of this proof is an extensive case study. Let $T$ be the horizontal cut tree of $P$. We need two more definitions.

\begin{itemize}
	\item A \textbf{pocket} in $T$ is a degree-1 rectangle $R$, whose only incident edge in $T$ is a 2-cut of $P$ and this cut covers the entire top or bottom side of $R$.
	\item A \textbf{corridor} in $T$ is a rectangle $R$ of degree $\ge 2$ in $T$, which has an incident edge in $T$ which is a 2-cut of $P$ and this cut covers the entire top or bottom side of $R$.
\end{itemize}

We distinguish 4 cases.
\begin{description}
	\itemsep0em
	\setlength{\baselineskip}{14pt}
	\item[\quad\Fref{case:Tisapath}.] $T$ is a path, \Fref{fig:cases}(a);
	\item[\quad\Fref{case:corridors}.] $T$ has a corridor, \Fref{fig:cases}(b);
	\item[\quad\Fref{case:pockets}.] $T$ does not have a corridor, but it has a pocket, \Fref{fig:cases}(c); 
	\item[\quad\Fref{case:twisted}.] None of the previous cases apply, \Fref{fig:cases}(d).
\end{description}

\begin{figure}[H]
	\centering
	\begin{subfigure}{.34\textwidth}
		\centering
		\includefigure{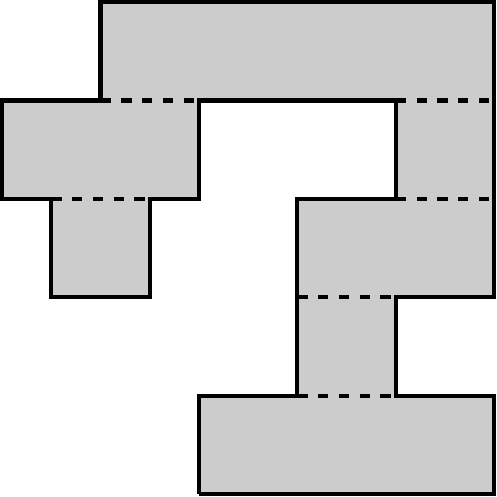}
		\caption{$T$ is a path.}
	\end{subfigure}%
	\begin{subfigure}{.66\textwidth}
		\centering
		\includefigure{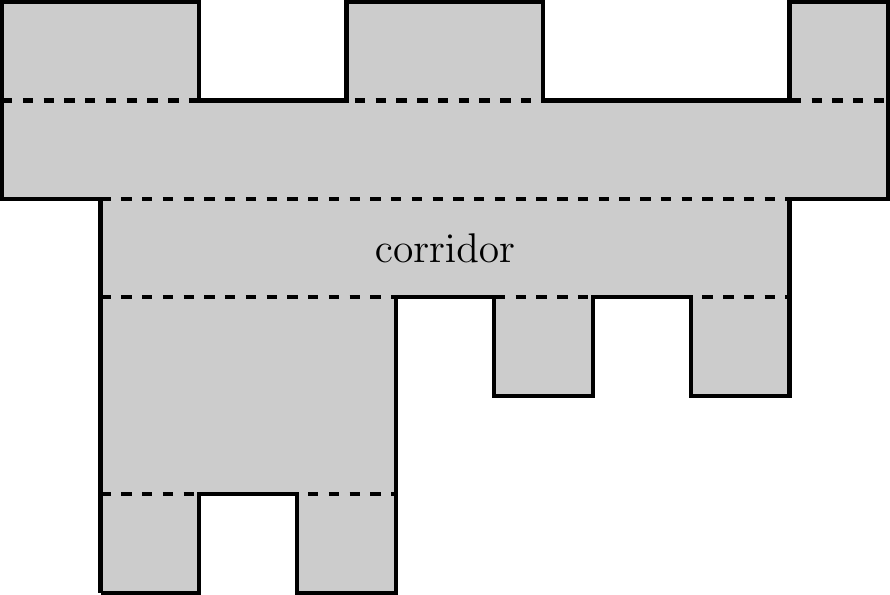}
		\caption{$T$ has a corridor.}
	\end{subfigure}
	
	\bigskip
	
	\begin{subfigure}{.34\textwidth}
		\centering
		\includefigure{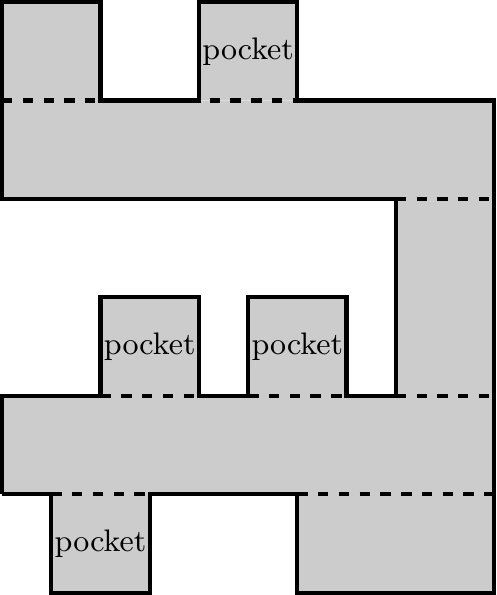}
		\caption{$T$ does not have a corridor, but it has a pocket.}
	\end{subfigure}%
	\begin{subfigure}{.64\textwidth}
		\centering
		\includefigure{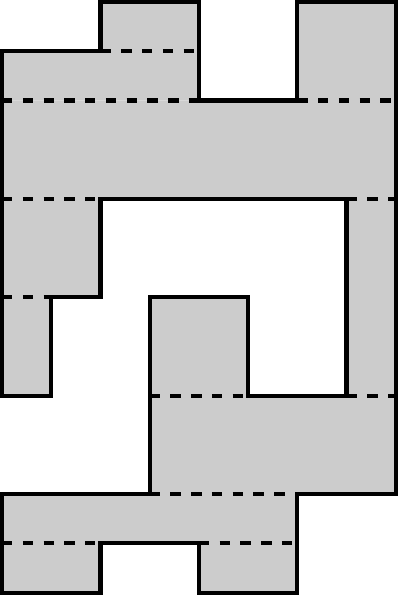}
		\caption{$T$ does not have a corridor or a pocket, and it is not a path.}
	\end{subfigure}
	\caption{The 4 cases of the proof.}\label{fig:cases}
\end{figure}

\case{\texorpdfstring{$T$}{T} is a path}\label{case:Tisapath}

\begin{claim}\label{claim:deg2doublecut}
	If an edge incident to a degree-2 vertex $R$ of $T$ is a
	2-cut of $P$ then the incident edges of $R$ form a good cut-system.
\end{claim}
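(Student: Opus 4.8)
The claim concerns the case where $T$ is a path. Let $R$ be a degree-2 vertex of $T$, so $R$ has exactly two incident edges $e_1 = \{R, R'\}$ and $e_2 = \{R, R''\}$; these represent cuts of $P$. The hypothesis is that one of them, say $e_1$, represents a 2-cut. We must show $\{e_1, e_2\}$ (together with itself, since a good cut-system allows $L_2 = L_3$) is a good cut-system in the sense of Definition~\ref{def:goodcutsystem}.

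**Plan.** First I would fix notation so that, when we cut along $e_1$, one side is a polyomino $P_1^1$ not containing $R$, and similarly for $e_2$ we get a side $P_1^2 \supset P_1^1$ (using that $T$ is a path, so $R'$ and its subtree lie on one side, $R''$ and its subtree on the other, and $R$ itself lies on the $P_1^2$ side). Since $T$ is a path, nesting $P_1^1 \subset P_1^2$ comes for free: $P_1^2 = P_1^1 \cup R$ essentially, up to the cut segments. Then I would compute $n(P_1^2) - n(P_1^1)$ via Claim~\ref{claim:tsize}: cutting along $e_2$ adds the rectangle $R$ (contributing $+4$) and the edge $e_1$ internal to $P_1^2$ (contributing $t(e_1)$), so $n(P_1^2) = n(P_1^1) + 4 + t(e_1)$. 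Since $e_1$ is a 2-cut, $t(e_1) = 0$, hence $n(P_1^2) = n(P_1^1) + 4$.

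**Extracting three consecutive even residues.** Now the good-cut-system test asks whether
$$\{n(P_1^1),\ n(P_1^2)\} \cup \{n(P_1^i) + 2 : L_i \text{ is a 2-cut}\}$$
contains $\{a, a+2, a+4\} \pmod{16}$ for some even $a$. Since $L_1 = e_1$ is a 2-cut, $n(P_1^1) + 2$ is in the set; together with $n(P_1^1)$ and $n(P_1^2) = n(P_1^1) + 4$ we get exactly $\{n(P_1^1),\ n(P_1^1)+2,\ n(P_1^1)+4\}$, three consecutive even numbers. So the condition holds with $a = n(P_1^1)$, and we are done. (One should also note $0 < n(P_1^1)$ and the partitions are admissible — $n_1 + n_2 = n$ for a 2-cut, $n+2$ for the other — which is immediate from the remark after the list of cut types.)

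**Main obstacle.** The only real subtlety is bookkeeping: making sure that when $T$ is a path the two cuts at $R$ genuinely nest and that the "subpolyomino obtained by cutting" is correctly identified with the corresponding subtree, so that Claim~\ref{claim:tsize} applies cleanly and the $+4$ computation is exactly right (in particular that no edge of $T$ is double-counted and that $R$ contributes a single rectangle node). Everything downstream is the one-line residue observation above. I would also invoke Remark~\ref{rem:invert} to note the system is symmetric, so no care about ordering is needed beyond picking the nested chain once.
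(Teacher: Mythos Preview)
Your proof is correct and essentially identical to the paper's: both compute $n(P_1^2)=n(P_1^1)+4$ from $t(e_1)=0$ and note that the 2-cut contributes $n(P_1^1)+2$, yielding three consecutive even residues. One small correction: the claim is \emph{not} restricted to the case ``$T$ is a path'' (it is invoked throughout Cases~2--4 as well), but your argument only actually uses $\deg_T(R)=2$, which already guarantees the nesting $P_1^1\subset P_1^2$ in any tree, so the proof stands in full generality.
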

\begin{proof}
	Let the two incident edges of $R$ be $e_1$ and $e_2$. Let their generated partitions be $e_1(P_1^1,P_2^1)$ and $e_2(P_1^2,P_2^2)$, such that $R\subseteq P_2^1\cap P_1^2$. Then $P_1^2=P_1^1\cup R$, so
	\begin{align*}
		n(P_1^2)=n(P_1^1)+n(R)+t(e_1)=n(P_1^1)+4.
	\end{align*}
	\Fref{def:goodcutsystem} is satisfied by $\{e_1,e_2\}$, as $\{n(P_1^1),n(P_1^2)\}\cup \{n(P_1^1)+2\}$ is a set of three consecutive even elements.
\end{proof}

\begin{claim}\label{claim:deg2path}
	If there are two rectangles $R_1$ and $R_2$ which are adjacent degree-2 vertices of $T$ then the union of the set of incident edges of $R_1$ and $R_2$ form a good cut-system.
\end{claim}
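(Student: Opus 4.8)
The plan is to use the three edges in question as a nested three-cut system and to verify \Fref{def:goodcutsystem} directly, via the same short vertex-count bookkeeping that appears in the proof of \Fref{claim:deg2doublecut}. First I would fix notation: since $T$ is a path and $R_1,R_2$ are adjacent degree-$2$ nodes, let $e=\{R_1,R_2\}$ be the edge joining them, let $e_1$ be the other edge of $T$ incident to $R_1$, and let $e_2$ be the other edge incident to $R_2$. Along the path these occur in the order $\dots,e_1,R_1,e,R_2,e_2,\dots$; they are three distinct edges, and $\{e_1,e,e_2\}$ is exactly the union of the incident-edge sets of $R_1$ and $R_2$. Each $e_i$ is (the dual of) a horizontal $1$- or $2$-cut of $P$, and deleting one edge of the tree $T$ splits $P$ into two polyominoes, so each of $e_1,e,e_2$ defines an admissible partition.

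Next I would orient the system so that it is nested: set $L_1=e_1$, $L_2=e$, $L_3=e_2$, and for each $L_i$ let $P_1^i$ be the side lying ``towards $e_1$''; concretely $P_1^1$ is the component of $T-e_1$ avoiding $R_1$, $P_1^2$ the component of $T-e$ containing $R_1$, and $P_1^3$ the component of $T-e_2$ containing $R_2$. Then $P_1^2=P_1^1\cup R_1$ and $P_1^3=P_1^2\cup R_2$, so $P_1^1\subsetneq P_1^2\subsetneq P_1^3$ and all parts are nonempty polyominoes. Writing $a=n(P_1^1)$ and applying \Fref{claim:tsize} as in the proof of \Fref{claim:deg2doublecut} (adjoining $R_1$ adds $4+t(e_1)$, then adjoining $R_2$ adds $4+t(e)$, and $t$ equals $0$ on a $2$-cut and $-2$ on a $1$-cut since $e_1,e,e_2\in E(T)$), I get
\begin{equation*}
  n(P_1^1)=a,\qquad n(P_1^2)=a+4+t(e_1),\qquad n(P_1^3)=a+8+t(e_1)+t(e).
\end{equation*}

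Finally I would check \Fref{def:goodcutsystem}: let $S$ be the set it associates to this system. It consists of even numbers, always contains $a=n(P_1^1)$ and $n(P_1^2)$, and also contains $n(P_1^1)+2$ if $e_1$ is a $2$-cut and $n(P_1^2)+2$ if $e$ is a $2$-cut. The case check is then two lines: if $e_1$ is a $2$-cut, then $n(P_1^2)=a+4\in S$ and $a+2=n(P_1^1)+2\in S$; if $e_1$ is a $1$-cut, then $n(P_1^2)=a+2\in S$, and $a+4$ lies in $S$ because it equals $n(P_1^3)$ when $e$ is a $1$-cut and equals $n(P_1^2)+2$ when $e$ is a $2$-cut. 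In every case $S$ contains the three consecutive even integers $a,a+2,a+4$, so $\{e_1,e,e_2\}$ is a good cut-system.

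The computation is essentially trivial once the cuts are laid out in the right order, so I expect no real obstacle; the only points requiring a bit of care are choosing the orientation of the three cuts so that they are genuinely nested (this is what forces the size formulas above) and noticing that when $e_1$ is a $1$-cut it is the middle cut $e$ that supplies the value $a+4$ — which is exactly why all three incident edges, and hence both rectangles $R_1$ and $R_2$, must be used.
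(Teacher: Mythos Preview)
Your proof is correct and takes essentially the same route as the paper: set up the three nested cuts $e_1,e,e_2$, compute $n(P_1^i)$ via \Fref{claim:tsize}, and verify that the associated set contains $\{a,a+2,a+4\}$. The only presentational difference is that the paper first invokes \Fref{claim:deg2doublecut} to dispose of the case where any of the three edges is a $2$-cut and then treats only the all-$1$-cut case explicitly, whereas you fold everything into a single short case split on $t(e_1)$ and $t(e)$; the content is identical. One cosmetic slip: the phrase ``since $T$ is a path'' is not part of the hypotheses (and the claim is later applied when $T$ is not a path), but your argument never uses it, relying only on $\deg(R_1)=\deg(R_2)=2$.
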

\begin{proof}
	Let the two components of $T-R_1-R_2$ be $T_1$ and $T_2$, so that
	$e_1,e_2,f\in E(T)$ joins $T_1$ and $R_1$, $R_1$ and $R_2$, $R_2$ and $T_2$, respectively.
	Obviously, $\cup V(T_1)\subset (\cup V(T_1))\cup R_1 \subset (\cup V(T_1))\cup R_1\cup R_2$. If one of $\{e_1,e_2,f\}$ is a 2-cut, we are done by the previous claim. Otherwise
	\begin{align*}
	n((\cup V(T_1))\cup R_1)=n(\cup V(T_1))+n(R_1)+t(e_1)&=n(\cup V(T_1))+2, \\
	n((\cup V(T_1))\cup R_1\cup R_2)=n(\cup V(T_1))+n(R_1)+n(R_2)+t(e_1)+t(f)&=n(\cup V(T_1))+4,
	\end{align*}
	and so $\{n(\cup V(T_1)),n((\cup V(T_1))\cup R_1),n((\cup V(T_1))\cup R_1\cup R_2)\}$ are three consecutive even elements. This concludes the proof that $\{e_1,e_2,f\}$ is a good cut-system of $P$.
\end{proof}

Suppose $T$ is a path. If $T$ is a path of length $\le 3$ such that all of its edges are 1-cuts, then $n(P)\le 8$. Also, if $T$ is path of length 2 and its only edge represents a 2-cut, then $n(P)=8$. Otherwise either \Fref{claim:deg2doublecut} or \Fref{claim:deg2path} can be applied to provide a good cut-system of $P$.

\case{\texorpdfstring{$T$}{T} has a corridor}\label{case:corridors}

	Let	$e=\{R',R\}\in E(T)$ be a horizontal 2-cut such that $R'$
	is a wider rectangle than $R$ and $\deg(R)\ge 2$. Let the generated partition be $e(P_1^e,P_2^e)$ such that $R'\subseteq P_1^e$. We can handle all possible cases as follows.
	\begin{enumerate}[(a)]

		\item If $n(P_1^e)\not\equiv 4,10\bmod 16$ or $n(P_2^e)\not\equiv
		4,10\bmod 16$, then $e$ is a good cut by \Fref{lemma:tech:b}.

		\item If $\deg(R)=2$, we find a good cut using
		\Fref{claim:deg2doublecut}.

		\item If $R_{BL}=\emptyset$, then $L(P_1^L,P_2^L)$ such that $R'\subseteq P_1^L$ in \Fref{fig:corridor_b} is a good cut, since $n(P_1^L)=n(P_1^e)+4-0\equiv 8,14 \bmod 16$.
    \figcap{corridor_b}{$L$ is a good cut}

    \item If $R_{BL}\neq\emptyset$ and $\deg(R)\ge 3$, then let us consider the following five cuts of $P$ (\Fref{fig:corridor_c}): $L_1(R_{BL},R\cup P_1^e)$, $L_2(R_{BL}\cup Q_1, Q_2\cup Q_3\cup P_1^e)$, $L_3(R_{BL}\cup Q_1\cup Q_2,Q_3\cup P_1^e)$, $L_4(Q_3, R_{BL}\cup Q_1\cup Q_2\cup P_1^e)$, and $L_5(Q_3\cup Q_2, R_{BL}\cup Q_1\cup P_1^e)$.
    \figcap{corridor_c}{$\deg(R)\ge 3$ and $R_{BL}\neq\emptyset$}
    The first piece of these partitions have the following number of vertices (respectively).
    \begin{enumerate}[\bfseries (1)]
      \setlength{\baselineskip}{16pt}
      \item $n(R_{BL})$
      \item $n(R_{BL}\cup Q_1)=n(R_{BL})+n(Q_1)+(t(e_{BL}(R))-2)=n(R_{BL})+t(e_{BL}(R))+2$
      \item $n(R_{BL}\cup Q_1\cup Q_2)=n(R_{BL})+n(Q_1\cup Q_2)+t(e_{BL}(R))=n(R_{BL})+t(e_{BL}(R))+4$
      \item $n(Q_3)$
      \item $n(Q_3\cup Q_2)=n(Q_3)+n(Q_2)-2=n(Q_3)+2$
    \end{enumerate}
    \begin{itemize}
      \item If $t(e_{BL}(R))=0$, then $\{L_1,L_2,L_3\}$ is a good cut-system, so one
      of them is a good cut.

      \item If $t(e_{BL}(R))=-2$, and none of the 5 cuts above are good cuts, then using \Fref{lemma:tech:b} on
      $L_2$ and $L_3$ gives ${n(R_{BL})\equiv 4,10\bmod 16}$.
      The same argument can be used on $L_4$ and $L_5$ to conclude that
      $n(Q_3)\equiv 4,10\bmod 16$.
      However, previously we derived that
      \begin{align*}
        n(P_2^e)&\equiv 4,10\bmod 16 \\
        n(R_{BL}\cup Q_1\cup Q_2\cup Q_3)=n(R_{BL}\cup Q_1\cup Q_2)+n(Q_3)-2=n(R_{BL})+n(Q_3)&\equiv 4,10 \bmod 16
      \end{align*}
      This is only possible if $n(R_{BL})\equiv n(Q_3)\equiv 10\bmod 16$.
      Let $e_{BL}(R)=\{R,S\}$.
      \begin{itemize}
        \item If $\deg(S)=2$, then let $E(T)\ni e'\neq e_{BL}(R)$ be the other edge of $S$. Let the partition generated by it be $e'(P_1^{e'},P_2^{e'})$ such that $R'\subseteq P_1^{e'}$. We have
        \begin{align*}
          n(R_{BL})&=n(P_2^{e'})+n(S)+t(e') \\
          n(P_2^{e'})&=n(R_{BL})-4-t(e')\equiv 6-t(e') \bmod 16
        \end{align*}
        Either $e'$ is a 1-cut, in which case $n(P_2^{e'})\equiv 8\bmod 16$, or $e'$ is a 2-cut, giving $n(P_2^{e'})\equiv 6\bmod 16$. In any case,
        \Fref{lemma:tech} says that $e'$ is a good cut.

        \item If $\deg(S)=3$, then we can partition $P$ as in \Fref{fig:corridor_e}. Since $n(Q_5\cup Q_6)=4+n(Q_6)-2$, by \Fref{lemma:tech:a} the only case when neither $L_6(Q_5\cup Q_6,Q_4\cup R\cup P_1^e)$ nor $L_7(Q_6,Q_4\cup Q_5\cup R\cup P_1^e)$ is a good cut of $P$ is when $n(Q_6)\equiv 4,10\bmod 16$. Also,
        $$10\equiv n(Q_4\cup Q_5\cup Q_6)=n(Q_4)+n(Q_5)+n(Q_6)-2-2\equiv n(Q_4)+n(Q_6)\bmod 16.$$
        \begin{itemize}
          \item If $n(Q_6)\equiv 10\bmod 16$, then $n(Q_4)\equiv 0\bmod 16$, hence
          \begin{align*}
            n(Q_4\cup Q_5\cup Q_{11}\cup Q_{12})=n(Q_4\cup Q_5)+4-4=n(Q_4)+n(Q_5)-2\equiv 2\bmod 16,
          \end{align*}
          showing that $L_8(Q_4\cup Q_5\cup Q_{11}\cup Q_{12},Q_6\cup Q_{13}\cup Q_2\cup Q_3\cup P_1^e)$ is a good cut.
          \item If $n(Q_6)\equiv 4\bmod 16$,
          \begin{align*}
            n(Q_6\cup Q_{13}\cup Q_2\cup Q_3)&=n(Q_6)+n(Q_{13}\cup Q_2)+n(Q_3)-2-2\equiv\\
            &\equiv n(Q_6)+10\equiv 14\mod 16,
          \end{align*}
          therefore $L_9(Q_6\cup Q_{13}\cup Q_2\cup Q_3,Q_4\cup Q_5\cup Q_{11}\cup Q_{12}\cup P_1^e)$ is a good cut.
        \end{itemize}
        \begin{figure}[H]
        	\centering
        	\includefigure{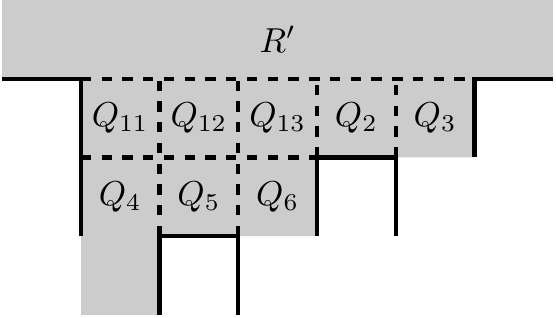}
        	\hspace{24pt}
        	\includefigure{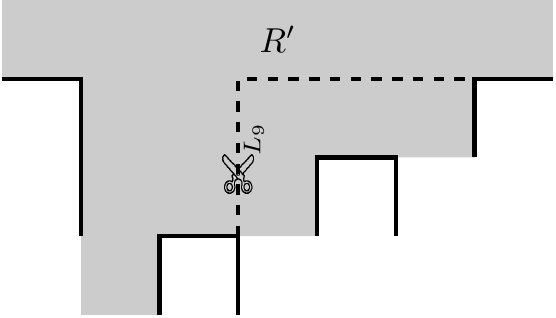}
        	\caption{$\deg(Q)\ge 3$ and $Q_{BL}\neq\emptyset$}\label{fig:corridor_e}
        \end{figure}
        In all of the above subcases we found a good cut.
      \end{itemize}
    \end{itemize}

	\end{enumerate}

\case{There are no corridors in \texorpdfstring{$T$}{T}, but there is a pocket}\label{case:pockets}

Let $S$ be a (horizontal) pocket. Also, let $R$ be the neighbor of $S$ in $T$. If $\deg(R)=2$, then \Fref{claim:deg2doublecut} provides a good cut-system of $P$. However, if $\deg(R)\ge 3$, we have two cases.

\subcase{If \texorpdfstring{$R$}{R} is adjacent to at least two pockets} 
Let $U$ be the union of $R$ and its adjacent pockets, and let $T_U$ be its \textbf{vertical} cut tree. It contains at least $4$ reflex vertices, therefore $V(T_U)\ge 3$.
		\begin{itemize}
			\item If $V(T_U)=3$, then $|E(T_U)|=2$. Thus $t(e)=0$ for any $e\in E(T_U)$, and \Fref{claim:deg2doublecut} gives a good cut-system $\mathcal{L}$ of $U$ such that all 4 vertices of $R$ are contained in $\ker\mathcal{L}$.
			\item If $V(T_U)\ge 4$, then \Fref{claim:deg2path} gives a good cut-system $\mathcal{L}$ of $U$ such that all 4 vertices of $R$ are contained in $\ker\mathcal{L}$.
		\end{itemize}
		Since there are no corridors in $P$, we have $$P=\Big(\big((U\cup R_{BL})\cup R_{TL}\big)\cup R_{BR}\Big)\cup R_{TR}.$$ By applying \Fref{lemma:extendconsecutives} repeatedly, the good cut-system $\mathcal{L}$ can be extended to a good cut-system of $P$.

\subcase{If \texorpdfstring{$S$}{S} is the only pocket adjacent to \texorpdfstring{$R$}{R}}
We may assume without loss of generality that $S$ intersect the top side of $R$. Again, define $U$ as the union of $R$ and its adjacent pockets.

		\begin{itemize}
			\item If $R_{TL}\neq\emptyset$, let $V=U\dotcup R_{TL}$. The cut-system $\{L_1,L_2,L_3\}$ in \Fref{fig:pockets1} is a good cut-system of $V$, and all 4 vertices of $R$ are contained in $\ker \{L_1,L_2,L_3\}$. By applying \Fref{lemma:extendconsecutives} repeatedly, we get a good cut-system of $P$. \figcap{pockets1}{$\{L_1,L_2,L_3\}$ is a good cut-system of $V=R\cup R_{TL}\cup S$}
			\item If $R_{TR}\neq\emptyset$, the case can be solved analogously to the previous case.
			\item Otherwise $R_{BL}\neq\emptyset$ and $R_{BR}\neq\emptyset$. Let $L_1(U_1^1,U_2^1)$ and $L_2(U_1^2,U_2^2)$ be the vertical cuts (from right to left) defined by the two reflex vertices of $U$, such that $v_{BR}(R)\in U_1^1\subset U_1^2$. Let $V=R_{BL}\dotcup U$. As before, $L_1$ and $L_2$ can be extended to cuts of $V$, say $L_1'(U_1^1,V_2^1)$, $L_2'(U_1^2,V_2^2)$. We claim that together with $e_{BL}(R)(U,V_2^3)$, they form a good cut-system $\mathcal{L}$ of $V$. This is obvious, as $\{n(U_1^1),n(U_1^2),n(U)\}=\{4,6,8\}$.
			Since $v_{BR}(R)\in \ker\mathcal{L}$, $P$ also has a good cut-system by \Fref{lemma:extendconsecutives}.
		\end{itemize}

\case{\texorpdfstring{$T$}{T} is not a path and it does not contain either corridors or pockets}\label{case:twisted}

By the assumptions of this case, any two adjacent rectangles are adjacent at one of their vertices, so the maximum degree in $T$ is 3 or 4. We distinguish between several sub-cases.
\vbox{
	\begin{description}
	\itemsep0em 
	\setlength{\baselineskip}{14pt}
	\item [\Fref{case:toporbotcontained}.] There exists a rectangle of degree \texorpdfstring{$\ge 3$}{at least 3} such that its top or bottom side is entirely contained by one of its neighboring rectangle;
	\item [\Fref{case:last}.] Every rectangle of degree \texorpdfstring{$\ge 3$}{at least 3} is such that its top and bottom side are not entirely contained by any of their neighboring rectangle;
	\begin{description}
		\itemsep0em 
		\item [\Fref{case:last1}.] There exist at least two rectangles of degree \texorpdfstring{$\ge 3$}{at least 3};
		\item [\Fref{case:last2}.] There is exactly one rectangle of degree \texorpdfstring{$\ge 3$}{at least 3}.
	\end{description}
\end{description}
}

\subcase{There exists a rectangle of degree \texorpdfstring{$\ge 3$}{at least 3} such that its top or bottom side is entirely contained by one of its neighboring rectangle}\label{case:toporbotcontained}
Let $R$ be a rectangle and $R'$ its neighbor, such that the top or bottom side of $R$ is $\subset \partial R'$. Moreover, choose $R$ such that if we partition $P$ by cutting $e=\{R,R'\}$, the part containing $R$ is minimal (in the set theoretic sense).

\medskip

Without loss of generality, the top side of $R$ is contained entirely by a neighboring rectangle $R'$ and $R_{TL}=\emptyset$.
\figcap{topside}{}
This is pictured in \Fref{fig:topside}, where $R=R_1\cup R_2\cup R_3$. We can cut off $R_{BL}$, $R_{BL}\cup R_1$, and $R_{BL}\cup R_1\cup R_2$, whose number of vertices are respectively
\begin{enumerate}[\qquad\bfseries (1)]
  \setlength{\baselineskip}{16pt}
  \item $n(R_{BL})$
  \item $n(R_{BL}\cup R_1)=n(R_{BL})+n(R_1)+(t(e_{BL}(R)))-2)=n(R_{BL})+t(e_{BL}(R))+2$
  \item $n(R_{BL}\cup R_1\cup R_2)=n(R_{BL})+n(R_1\cup R_2)+t(e_{BL}(R)))=n(R_{BL})+t(e_{BL}(R))+4$
\end{enumerate}
If $t(e_{BL}(R))=0$, then one of the 3 cuts is a good cut by \Fref{lemma:tech:a}.

\medskip

Otherwise $t(e_{BL}(R))=-2$, thus either one of the 3 cuts is a good cut or $n(R_{BL})\equiv 4,10\bmod 16$. Let $S$ be the rectangle for which $e_{BL}(R)=\{R,S\}$. Since $e_{BL}(R)$ is a 1-cut containing the top side of $S$, we cannot have $\deg(S)=3$, as it contradicts the choice of $R$. We distinguish between two cases.
\subsubcase{$\deg(S)=1$}
	Let $U=R'\cup R\cup R_{BL}\cup R_{BR}$, which is depicted on \Fref{fig:topside3cuts:before}. It is easy to see that $L_1(Q_1,U_2^1)$, $L_2(Q_1\cup Q_2,U_2^2)$, and $L_3(Q_1\cup Q_2\cup Q_3,U_2^3)$ in \Fref{fig:topside3cuts:after} is a good cut-system of $U$.
	\begin{figure}[H]
		\centering
		\begin{subfigure}{.40\textwidth}
			\centering
			\includefigure{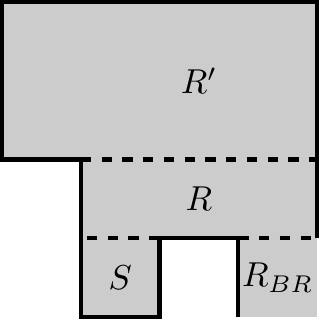}
			\caption{}
			\label{fig:topside3cuts:before}
		\end{subfigure}%
		\begin{subfigure}{.40\textwidth}
			\centering
			\includefigure{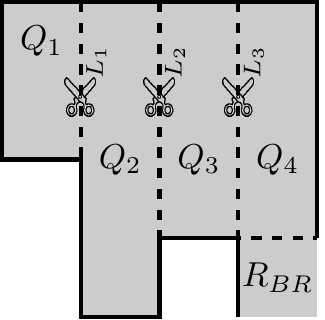}
			\caption{}
			\label{fig:topside3cuts:after}
		\end{subfigure}
		\caption{}
		\label{fig:topside3cuts}
	\end{figure}
	 As all 4 vertices of $S$ are contained in $\ker\{L_1,L_2,L_3\}$, we can extend this good cut-system to $P$ by reattaching $S_{TL}$, $S_{BL}$, $S_{TR}$ (if non-empty) via \Fref{lemma:extendconsecutives}. Therefore $P$ has a good cut.
	
\subsubcase{$\deg(S)=2$}
	Let $f$ be the edge of $S$ which is different from $e_{BL}(R)=e_{TL}(S)$. Let the partition generated by it be $f(P_1^f,P_2^f)$, where $S\subseteq P_2^f$. We have $n(P_1^f)=n(R_{BL})-n(S)-t(f)$.
	\begin{itemize}
		\item If $t(f)=-2$, then $n(P_1^f)\equiv 2,8\bmod 16$, so $f$ is a good cut by \Fref{lemma:tech:a}.
		\item If $t(f)=0$, then $n(P_1^f)\equiv 0,6\bmod 16$, so $f$ is a good cut by \Fref{lemma:tech:b}.
	\end{itemize}

\subcase{Every rectangle of degree \texorpdfstring{$\ge 3$}{at least 3} is such that its top and bottom side are not entirely contained by any of their neighboring rectangle}\label{case:last}

Let $R$ be a rectangle of degree $\ge 3$ and $e=\{R,S\}$ be one of its edges. Let the partition generated by $e$ be $e(P_1^e,P_2^e)$, where $R\subset P_1^e$ and $S\subseteq P_2^e$. If $e$ is a 1-cut then by the assumptions of this case $\deg(S)\le 2$. 
\begin{itemize}
	\item If $\deg(S)=1$ and $t(e)=-2$, then $n(P_2^e)+t(e)=2$.
	\item If $\deg(S)=1$ and $t(e)=0$, then $n(P_2^e)+t(e)=4$.
	\item If $\deg(S)=2$ and one of the edges of $S$ is a 0-cut, then $P$ has a good cut by \Fref{claim:deg2doublecut}.
	\item If $\deg(S)=2$ and both edges of $S$, $e$ and (say) $f$ are 1-cuts:
	Let the partition generated by $f$ be $P=P_1^f\dotcup P_2^f$, such that $S\in P_1^f$. Then
	$n(P_2^e)=n(P_2^f)+n(S)+t(f)=n(P_2^f)+2$. Either one of $e$ and $f$ is a good cut, or by \Fref{lemma:tech:a} we have $n(P_2^f)\equiv 4,10\mod 16$. In other words,
	$n(P_2^e)+t(e)\equiv 4,10\mod 16$. Similarly, $n(P_1^f)=n(P_1^e)+4-2=n(P_1^e)+2$, so $n(P_1^e)\equiv 4,10\mod 16$.
	\item If $\deg(S)\ge 3$, then $t(e)=0$. Either $e$ is a good cut, or by \Fref{lemma:tech:b} we have $n(P_2^e)+t(e)\equiv 4,10\mod 16$. \Fref{lemma:tech:b} also implies $n(P_1^e)\equiv 4,10\mod 16$.
\end{itemize}
From now on, we assume that none of the edges of the neighbors of a degree $\ge 3$ rectangle represent a good cut, so in particular we have $$n(P_2^e)+t(e)\equiv 2,4,\text{ or }10\mod 16.$$

In addition to the simple analysis we have just conducted, we deduce an easy claim to be used in the following subcases.

\begin{claim}\label{claim:opposite}
	Let $R\in V(T)$ be of degree $\ge 3$ and suppose both $R_{BR}\neq\emptyset$ and $R_{TR}\neq\emptyset$. Then $P$ has two admissible cuts $L_1$ and $L_2$ such that they form a good cut-system or
	$$
	\begin{array}{rl}
		\textbf{(i)}&\text{one of the parts generated by $L_1$ has size }\Big(n(R_{BR})+t(e_{BR}(R))\Big)+\Big(n(R_{TR})+t(e_{TR}(R))\Big)+2,\makeatletter\edef\@currentlabel{\arabic{theorem}(i)}\makeatother\label{claim:opposite:2} \\
		\mbox{}&\textbf{\quad and }\\
		\textbf{(ii)}&\text{one of the parts generated by $L_2$ has size }
		\Big(n(R_{BR})+t(e_{BR}(R))\Big)+\Big(n(R_{TR})+t(e_{TR}(R))\Big)+4.\makeatletter\edef\@currentlabel{\arabic{theorem}(ii)}\makeatother\label{claim:opposite:4}
	\end{array}
	$$
\end{claim}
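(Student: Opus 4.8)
The plan is to exhibit the two cuts explicitly. Set $a=n(R_{BR})+t(e_{BR}(R))$ and $b=n(R_{TR})+t(e_{TR}(R))$; both are even and at least $2$. The size computations are the routine part. Gluing $R_{BR}$ and $R_{TR}$ back onto the whole of $R$ re-creates exactly the tree edges $e_{BR}(R)$ and $e_{TR}(R)$ together with their $t$-values (these depend only on $R$ and the rectangle adjacent to it across the cut, not on what is attached to the other sides of $R$), so \Fref{claim:tsize} gives $n(R\cup R_{BR}\cup R_{TR})=4+a+b=a+b+4$. If instead we glue them onto a proper sub-rectangle $R^{*}\subseteq R$ incident to the right side of $R$ whose left side passes through whichever of the two inner (left) endpoints of $e_{BR}(R)$, $e_{TR}(R)$ is further to the left, then the re-gluing on that side becomes flush, so its edge carries $t$-value $t(e)-2$ rather than $t(e)$ and a pair of vertices is saved (\Fref{rem:refine} covers the value $-4$ that can occur), while the re-gluing on the other side still costs $t(e)$; hence $n(R^{*}\cup R_{BR}\cup R_{TR})=a+b+2$. (When the two inner reflex vertices have equal $x$-coordinate the minimal such $R^{*}$ makes both re-gluings flush and the size drops to $a+b$; see below.)

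Realizing these as cuts: the available reflex vertices of $P$ are the two inner endpoints $u_{BR}$, $u_{TR}$ of $e_{BR}(R)$, $e_{TR}(R)$, together with $v_{BR}(R)$, $v_{TR}(R)$, which are reflex vertices of $P$ exactly when the corresponding cut is a $2$-cut. Since the right side of $R$ is a polygon edge, nothing is attached to the right of $R$, so a vertical segment of $\mathrm{int}(R)$ issued from $u_{BR}$ (or $u_{TR}$) and continued until it leaves $P$ separates off precisely $R^{*}\cup R_{BR}\cup R_{TR}$ for the appropriate $R^{*}$. That segment is a $1$-cut when it meets the opposite side of $R$ at a regular boundary point; when it would instead run into the interior of a left-hand neighbour of $R$, I turn it along the interface between $R$ and $R_{TL}$ (or $R$ and $R_{BL}$) and terminate it at $u_{TL}$ or $u_{BL}$, obtaining an $L$-cut whose interior lies in $\mathrm{int}(P)$. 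Either way this produces $L_{1}$, a cut with a part of size $a+b+2$. For $L_{2}$: if $\deg R=3$ then, since $R_{BR}$ and $R_{TR}$ are both nonempty, exactly one of $R_{TL}$, $R_{BL}$ is present, and cutting that single neighbour off leaves $R\cup R_{BR}\cup R_{TR}$; so $L_{2}$ is that (1- or 2-)cut and its far part has size $a+b+4$, giving the second alternative of the claim ((i) and (ii)). If $\deg R=4$ then $P\setminus\mathrm{int}(R)$ consists of the four pieces $R_{TL},R_{TR},R_{BL},R_{BR}$ meeting only along $\partial R$, so no single cut separates $R\cup R_{BR}\cup R_{TR}$ from $R_{TL}\cup R_{BL}$; here I aim for the good-cut-system alternative, taking $L_{1}$ together with a second cut that, when the left-to-right order of $u_{TL},u_{BL},u_{TR},u_{BR}$ allows it, routes through $u_{TL}$ or $u_{BL}$ to cut off a part of size $a+b+4$ (losing only a right sliver of $R$), and, when it does not, using \Fref{lemma:tech:a}, \Fref{lemma:tech:b} and the standing assumption that no edge at a neighbour of a degree-$\ge 3$ rectangle is a good cut to force the three-consecutive-residues condition.

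The main obstacle is exactly this case distinction: cataloguing the configurations by the horizontal order of the (up to four) reflex vertices on the top and bottom sides of $R$, and checking in the degree-$4$ situations that one of the two alternatives of the claim is always reached — either a cut of size $a+b+4$ can be routed to a convenient reflex vertex, or the residue classes of the sizes already at hand combine into a good cut-system. By contrast, once the correct subdivision is fixed the size arithmetic follows at once from \Fref{claim:tsize}, \Fref{def:tfunc} and \Fref{rem:refine}, and admissibility ($0<n_{1},n_{2}$ and $n_{1}+n_{2}\le n+2$) is automatic, since the third edge of $R$ leads to a subtree disjoint from $R_{BR}$ and $R_{TR}$, so $R\cup R_{BR}\cup R_{TR}\subsetneq P$.
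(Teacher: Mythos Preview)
Your plan is close in spirit to the paper's, but you make the degree-$4$ case harder than it is and leave exactly that case unresolved. The paper avoids the configuration analysis entirely by building the cuts up through \Fref{lemma:extend}. It first works in $U=R\cup R_{BL}\cup R_{BR}$ (by the symmetry of the statement under vertical reflection one may assume $R_{BL}\neq\emptyset$, so $U$ has two reflex vertices $u_{BL},u_{BR}$ on the bottom side of $R$), takes $L_1,L_2$ to be the vertical $1$-cuts of $U$ through $u_{BR}$ and $u_{BL}$, and then re-attaches $R_{TR}$ and finally $R_{TL}$ via \Fref{lemma:extend}. The point is that \Fref{lemma:extend}, through \eqref{eq:newsize}, already absorbs all the geometry: either the extended cut still satisfies $n_1+n_2=n+2$, in which case the size formulas go through verbatim and give pieces of size $a+b+2$ and $a+b+4$ irrespective of $\deg R$ and of the left-to-right order of the four reflex vertices; or the extension drops to $n_1+n_2=n$, i.e.\ the extended cut has become a $2$-cut, and then $\{L_1',L_2'\}$ is a good cut-system by the argument of \Fref{claim:deg2doublecut}, extendable to $P$ via \Fref{lemma:extendconsecutives}.

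Your concrete gap is the $\deg R=4$ branch of $L_2$. You observe that no single cut separates $R\cup R_{BR}\cup R_{TR}$ from $R_{TL}\cup R_{BL}$ and then reach for a residue argument you never carry out. But $L_2$ need not cut off all of $R$: the vertical cut through $u_{BL}$ (bent into an $L$ toward $u_{TR}$ or $u_{TL}$ when it would otherwise leave $R$ into a neighbour) already separates $(R\cap\{x\ge u_{BL}.x\})\cup R_{BR}\cup R_{TR}$, and a direct count---or one invocation of \Fref{lemma:extend}---shows this piece has size $a+b+4$ in every configuration, \emph{unless} $u_{BL}$ and $u_{TR}$ share an $x$-coordinate, which is precisely the $2$-cut branch. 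The same coincidence $u_{BR}.x=u_{TR}.x$ is a small unhandled case in your $L_1$ as well (your ``see below'' never returns to it). Once this is seen, the ``main obstacle'' disappears and the fallback to the standing residue assumptions of \Fref{case:last} is unnecessary---and, as written, that fallback is only a hope, not an argument.
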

\begin{proof}
	Let $U=R\cup R_{BL}\cup R_{BR}$. Let $L_1(U_1^1,U_2^1)$ and $L_2(U_1^2,U_2^2)$ be the vertical cuts of $U$ 
	defined by the two reflex vertices of $U$ that are on the boundary of $R$, such that $v_{BR}(R)\in U_1^1\subset U_1^2$. By \Fref{lemma:extend}, $L_1$ and $L_2$ can be extended to cuts of $V=R\cup R_{BL}\cup R_{BR}\cup R_{TR}$, say $L_1'(V_1^1,U_2^1)$, $L_2'(V_1^2,U_2^2)$. If one of $L_1'$ or $L_2'$ is a 2-cut, then similarly to \Fref{claim:deg2doublecut} one can verify they form a good cut-system of $V$ which we can extend to $P$. Otherwise
	\begin{align*}
		n(V_1^1)&=n(R\cap U_1^1)+n(R_{BR})+n(R_{TR})+(t(e_{BR}(R))-2)+t(e_{TR}(R))=\\
		&=\Big(n(R_{BR})+t(e_{BR}(R))\Big)+\Big(n(R_{TR})+t(e_{TR}(R))\Big)+2,\\
		n(V_1^2)&=n(R\cap U_1^2)+n(R_{BR})+n(R_{TR})+t(e_{BR}(R))+t(e_{TR}(R))=\\
		&=\Big(n(R_{BR})+t(e_{BR}(R))\Big)+\Big(n(R_{TR})+t(e_{TR}(R))\Big)+4.
	\end{align*}
	Lastly, we extend $L_1'$ and $L_2'$ to $P$ by reattaching $R_{TL}$ using \Fref{lemma:extend}. This step does not affect the parts $V_1^1$ and $V_1^2$, so we are done.
\end{proof}

\subsubcase{There exist at least two rectangles of degree \texorpdfstring{$\ge 3$}{at least 3}}\label{case:last1}
In the subgraph $T'$ of $T$ which is the union of all paths of $T$ which connect two degree $\ge 3$ rectangles, let $R$ be a leaf and $e=\{R,S\}$ its edge in the subgraph. As defined in the beginning of \Fref{case:last}, the set of incident edges of $R$ (in $T$) is $\{e_i\ |\ 1\le i\le \deg(R)\}$, and without loss of generality we may suppose that $e=e_{\deg(R)}$. The analysis also implies that for all $1\le i\le \deg(R)-1$ we have $n(P_2^{e_i})+t(e_i)=2,4$. 

\medskip

By the assumptions of this case $\deg(S)\ge 2$, therefore $n(P_1^e)\equiv 4\text{ or }10\mod 16$. If $\deg(S)\ge 3$ let $Q=S$. Otherwise $\deg(S)=2$ and let $Q$ be the second neighbor of $R$ in $T'$. The degree of $Q$ cannot be 1 by its choice. If $\deg(Q)=2$, then we find a good cut using \Fref{claim:deg2path}. In any case, we may suppose from now on that $\deg(Q)\ge 3$.

\medskip

Let $\{f_i\ |\ 1\le i\le \deg(Q)\}$ be the set of incident edges of $Q$ such that they generate the partitions $P=P_1^{f_i}\dotcup P_2^{f_i}$ where $R\subset P_1^{f_i}$ and $Q\subset P_2^{f_1}$. We have
\begin{align*}
	n(P_1^{e})&=n(R)+\sum_{i=1}^{\deg(R)-1}\Big(n(P_2^{e_i})+t(e_i)\Big) \in 4+\{2,4\}+\{2,4\}+\{0,2,4\}=\{8,10,12,14,16\},
\end{align*}
so the only possibility is $n(P_1^e)=10$.
\begin{itemize}
	\item If $\deg(S)\ge 3$, $e$ is a 2-cut (by the assumption of \Fref{case:last}), so by \Fref{lemma:tech:c} either $e$ is a good cut or $n(P)\equiv 14\bmod 16$. Since $Q=S$ and $e=f_1$, we have 
	\begin{align*}
		n(P_1^{f_1})+t(f_1)&=n(P_1^e)+t(e)= 10 \\
		n(P_2^{f_1})&=n(P)-n(P_1^{f_1})-t(f_1)\equiv 14-10\equiv 4\bmod 16.
	\end{align*}
	\item If $\deg(S)=2$, either $e$ is a 1-cut or we find a good cut using \Fref{claim:deg2doublecut}. Also, $f_1=\{S,Q\}$ is a 1-cut too (otherwise apply \Fref{claim:deg2doublecut}), so $$n(P_1^{f_1})+t(f_1)=n(P_1^e)+n(S)+t(e)+t(f_1)=10.$$ By \Fref{lemma:tech:c}, either $f_1$ is a good cut (as $n(P_1^{f_1})=12$) or $n(P)\equiv 14\bmod 16$. Thus
	$$n(P_2^{f_1})=n(P)-n(P_1^{f_1})-t(f_1)\equiv 14-12+2\equiv 4\bmod 16$$
\end{itemize}
We have 
\begin{align*}
	n(P)&=n(Q)+\Big(n(P_1^{f_1})+t(f_1)\Big)+\sum_{i=2}^{\deg(Q)}\Big(n(P_2^{f_i})+t(f_i)\Big)\in \\ &\in 14+\{2,4,10\}+\{2,4,10\}+\{0,2,4,10\} \mod 16.
\end{align*}
The only way we can get $14\bmod 16$ on the right hand side is when $\deg(Q)=4$ and out of $$n(Q_{BL}),n(Q_{BR}),n(Q_{TL}),n(Q_{TR})\mod 16$$ one is $2$, another is $4$, and two are $10 \bmod 16$.

\medskip

The last step in this case is to apply \Fref{claim:opposite} to $Q$. If it does not give a good cut-system then it gives an admissible cut where one of the parts has size congruent to $2+10+2=14$ or $2+4+2=8$ modulo~16, therefore we find a good cut anyway.

\subsubcase{There is exactly one rectangle of degree \texorpdfstring{$\ge 3$}{at least 3}}\label{case:last2}
Let $R$ be the rectangle of degree $\ge 3$ in $T$, and let $\{e_i \ |\ 1\le i\le \deg(R)\}$ the edges of $R$, which generate the partitions $P=P_1^{e_i}\dotcup P_2^{e_i}$ where $R\subset P_1^{e_i}$. Then $P_2^{e_i}$ is path for all $i$. If either \Fref{claim:deg2doublecut} or \Fref{claim:deg2path} can be applied, $P$ has a good cut. The remaining possibilities can be categorized into 3 types:
$$
\begin{array}{lllll}
	\text{\bfseries Type 1: } & t(e_i)=-2, & n(P_2^{e_i})=4, & \text{ and } & n(P_2^{e_i})+t(e_i)=2;\\
	\text{\bfseries Type 2: } & t(e_i)=-2, & n(P_2^{e_i})=4+4-2=6, & \text{ and } & n(P_2^{e_i})+t(e_i)=4;\\
	\text{\bfseries Type 3: } & t(e_i)=0, & n(P_2^{e_i})=4, & \text{ and } & n(P_2^{e_i})+t(e_i)=4.
\end{array}
$$
Without loss of generality $R_{BR}\neq\emptyset$ and $R_{TR}\neq\emptyset$. We will now use \Fref{claim:opposite}. If it gives a good cut-system, we are done. Otherwise
\begin{itemize}
	\item If exactly one of $e_{BR}(R)$ and $e_{TR}(R)$ is of {\bfseries type~1}, apply \Fref{claim:opposite:2}: it gives an admissible cut which cuts off a subpolyomino of size $2+4+2=8$, so $P$ has a good cut.
	\item If both $e_{BR}(R)$ and $e_{TR}(R)$ are of {\bfseries type~1}, apply \Fref{claim:opposite:4}: it gives an admissible cut which cuts off a subpolyomino of size $2+2+4=8$, so $P$ has a good cut.
	\item If none of $e_{BR}(R)$ and $e_{TR}(R)$ are of {\bfseries type~1} and $n(P)\not\equiv 14 \pmod{16}$, apply  \Fref{claim:opposite:2}: it gives an admissible cut which cuts off a subpolyomino of size $4+4+4=12$, which is a good cut by \Fref{lemma:tech:c}.
\end{itemize}
Now we only need to deal with the case where $n(P)=14$ and neither $e_{BR}(R)$ nor $e_{TR}(R)$ is of {\bfseries type~1}.

\medskip

If $R$ still has two edges of {\bfseries type~1}, again \Fref{claim:opposite:2} gives a good cut of $P$. If $R$ has at most one edge of {\bfseries type~1}, we have
$$14=n(P)=n(R)+\sum_{i=1}^{\deg(R)}\Big(n(P_2^{e_i})+t(e_i)\Big)\in 4+\{2,4\}+\{4\}+\{4\}+\{0,4\}=\{14,16,18,20\},$$
and the only way we can get $14$ on the right hand is when $\deg(R)=3$ and both $e_{BR}(R)$ and $e_{TR}(R)$ are of {\bfseries type~2~or~3} while the third incident edge of $R$ is of {\bfseries type~1}. We may  assume without loss of generality that the cut represented by $e_{TR}(R)$ is longer than the cut represented by $e_{BR}(R)$. 
\begin{itemize}
	\item If $P$ is vertically convex, its vertical cut tree is a path, so it has a good cut as deduced in \Fref{case:Tisapath}.
	\item If $P$ is not vertically convex, but $e_{TR}(R)$ is a {\bfseries type~2} edge of $R$, such that the only horizontal cut of $R_{TR}$ is shorter than the cut represented by $e_{TR}(R)$, then $P'=P-R_{BR}$ is vertically convex, and has $(10-4)/2=3$ reflex vertices. By \Fref{claim:deg2doublecut} or \Fref{claim:deg2path}, $P'$ has a good cut-system such that its kernel contains $v_{BR}(R)$, since its $x$ coordinate is maximal in $P'$. \Fref{lemma:extendconsecutives} states that $P$ also has a good cut-system.
	\item Otherwise we find that the top right part of $P$ looks like to one of the cases in \Fref{fig:44}. It is easy to see that in all three pictures $L$ is an admissible cut which generates two subpolyominoes of 8 vertices.
	\begin{figure}[H]
		\centering
		\begin{subfigure}{.33\textwidth}
			\centering
			\includefigure{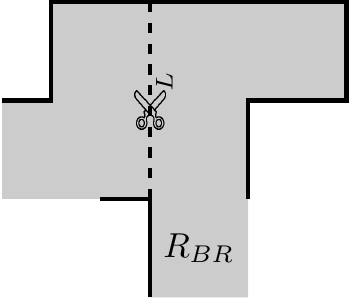}
			\caption{}\label{fig:44:a}
		\end{subfigure}%
		\begin{subfigure}{.33\textwidth}
			\centering
			\includefigure{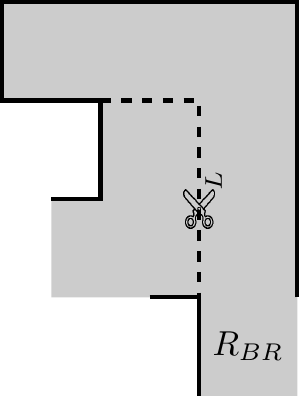}
			\caption{}\label{fig:44:b}
		\end{subfigure}%
		\begin{subfigure}{.33\textwidth}
			\centering
			\includefigure{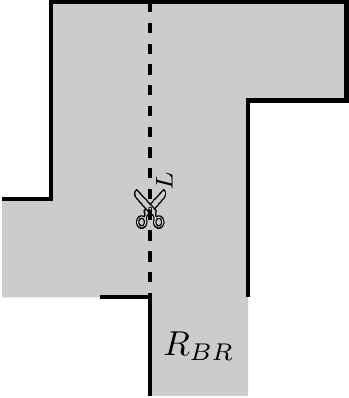}
			\caption{}\label{fig:44:c}
		\end{subfigure}%
		
		\caption{}\label{fig:44}
	\end{figure}
\end{itemize}

The proof of \Fref{thm:mobile} is complete. To complement the formal proof, we now demonstrate the algorithm on \Fref{fig:partition}.

\figcap{partition}{The output of the algorithm on a polyomino $n=52$ vertices.}

First, we resolve a corridor via the $L$-cut \circled{1}, which creates two pieces of 20 and 34($\equiv 2\bmod 16$) vertices. As a result of this cut, a new corridor emerges in the 20-vertex piece, so we cut the polyomino at \circled{2}, cutting off a piece of 8 vertices. The other piece of 14 vertices containing two pockets is further divided by \circled{3} into a piece of 6 and 8 vertices. Another pocket is dealt with by cut \circled{4}, which divides the polyomino into an 8- and a 28-vertex piece. To the larger piece, \Fref{case:last} applies, and we find cut \circled{5}, which produces a 16- and a 14-vertex piece. The 16-vertex piece is cut into two 8-vertex pieces by \circled{6}. Lastly, \Fref{fig:44:c} of \Fref{case:last2} applies to the 14-vertex piece, so cut \circled{7} divides it into two 8-vertex pieces.

\medskip

We got lucky with cuts \circled{2} and \circled{6} in the sense that they both satisfy the inequality in (\ref{eq:n1n2}) strictly. Hence, only 8 pieces are needed to partition the polyomino on \Fref{fig:partition} into polyominoes of $\le 8$-vertex pieces, instead of the upper bound of $(3\cdot 52+4)/16=10$.

\section{Conclusion}\label{sec:conclusion}
We have not dealt with algorithmic aspects yet in this paper, but the proof given in the previous section can easily be turned into an $O(n^2)$ algorithm which partitions $P$ into at most $\ff{n}$ simple polyominoes. The running time can be improved to $O(n)$ by using linear-time triangulation \citep{chazelle} to construct the horizontal cut tree of $P$ (such that the edge list of a vertex is ordered by the $x$ coordinates of the corresponding cuts) and a list of pockets, corridors, and rectangles of \Fref{case:toporbotcontained}, all of which can be maintained in $O(1)$ for the partitions after finding and performing a cut in $O(1)$.

\medskip

\Fref{thm:mobile} fills a gap between two already established (sharp) results: in \citep{Gy} it is proved that polyominoes can be partitioned into at most $\lfloor \frac{n}{4}\rfloor$ polyominoes of at most 6 vertices, and in \citep{GyH} it is proved that any polyomino in general position (a polyomino without 2-cuts) can be partitioned into $\lfloor \frac{n}{6}\rfloor$ polyominoes of at most $10$ vertices. However, we do not know of a sharp theorem about partitioning polyominoes into polyominoes of at most 12 vertices.

\medskip 

Furthermore, for $k\ge 4$, not much is known about partitioning (not necessarily simply connected) orthogonal polygons into polyominoes of at most $2k$ vertices. According to the ``metatheorem,'' the first step in this direction would be proving that an orthogonal polygon of $n$ vertices with $h$ holes can be partitioned into $\lfloor \frac{3n+4h+4}{16}\rfloor$ polyominoes of at most 8 vertices. This would generalize the corresponding art gallery result in \citep[Thm.~5.]{GyH}.

\section*{Acknowledgment}
The authors would like to thank Joseph O'Rourke and an anonymous referee for their helpful comments and remarks.

\bibliography{references}

\end{document}